\newtheorem{thm}{Theorem}[section]
\newtheorem{prop}[thm]{Proposition}
\theoremstyle{definition}
\newtheorem{defn}[thm]{Definition}
\newtheorem{rem}[thm]{Remark}
\numberwithin{equation}{section}
\def\Xint#1{\mathchoice
   {\XXint\displaystyle\textstyle{#1}}%
   {\XXint\textstyle\scriptstyle{#1}}%
   {\XXint\scriptstyle\scriptscriptstyle{#1}}%
   {\XXint\scriptscriptstyle\scriptscriptstyle{#1}}%
   \!\int}
\def\XXint#1#2#3{{\setbox0=\hbox{$#1{#2#3}{\int}$}
     \vcenter{\hbox{$#2#3$}}\kern-.5\wd0}}
\def\dashint{\Xint-}
\begin{document}
\title[Nonlinear potentials]{On the good-$\lambda$ inequality for nonlinear potentials}
\date{\today}

\author{Petr Honz\'\i k}
\address{Institute of Mathematics, AS CR,  
Zitn\'{a} 25, CZ - 115 67 Praha 1, Czech 
Republic} 
\email{honzik@gmail.com}
\author{Benjamin J. Jaye}
\address{Department of Mathematics, University of Missouri, Columbia, Missouri 65211, USA}
\email{bjjm93@mizzou.edu}

\subjclass[2000]{Primary 42B35,  42B37. Secondary 35J92, 35J60}

\keywords{Weighted norm inequalities, good-$\lambda$ inequality, elliptic equations}

\thanks{Honzik was supported by the Institutional Research Plan no.
AV0Z10190503 and by the grant KJB100190901 GAAV.  Jaye was partially supported by NSF grant DMS-0901550.}

\begin{abstract}
This note concerns an extension of the good-$\lambda$ inequality for fractional integrals, due to B. Muckenhoupt and R. Wheeden \cite{MW}.  The classical result is refined in two aspects.  Firstly, general nonlinear potentials are considered; and secondly, the constant in the inequality is proven to decay exponentially.  As a consequence, the exponential integrability of the gradient of solutions to certain quasilinear elliptic equations is deduced.  This in turn is a consequence of certain Morrey space embeddings which extend classical results for the Riesz potential.  In addition, the good-$\lambda$ inequality proved here provides an elementary proof of the result of Jawerth, Perez and Welland \cite{JPW} regarding the positive cone in certain weighted Triebel-Lizorkin spaces.
\end{abstract}

\maketitle

\section{Introduction}

For $n\geq 1$, $0<\alpha<n$ and $0<q\leq \infty$, we form the nonlinear potential $\mathbf{T}^q_{\alpha}(\mu)$ of a non-negative measure $\mu$, by:
\begin{equation}\label{contpot}
\mathbf{T}(\mu)(x) = \mathbf{T}^q_{\alpha}(\mu)(x) = \Bigl(\int_0^{\infty}\Bigl(\frac{\mu(B(x,r))}{r^{n-\alpha}}\Bigl)^{q}\frac{dr}{r}\Bigl)^{1/q}.
\end{equation}
Whenever there is no confusion we will omit the indices in the definition of $\mathbf{T}$.   In the case that $q=1$, we recover (up to a constant) the classical Riesz potential $\mathbf{I}_{\alpha}(\mu)$, and if $q=\infty$, the resulting operator is the fractional maximal operator $\mathbf{M}_{\alpha}(\mu)$.

For general $0<q<\infty$, $\mathbf{T}(\mu)$ can be written as a power of T. Wolff's potential; and (despite its many indices) plays an important role in the study of Sobolev and Triebel-Lizorkin spaces \cite{AH, HW, JPW}, as well as in the theory of certain quasi-linear equations \cite{KM, DM2, DM3}.

It is well known that although the potential operator $\mathbf{T}^{q}_{\alpha}(d\mu)$ is not in general pointwise comparable to the fractional maximal operator $\mathbf{M}_{\alpha}(d\mu)$; a useful substitute is the 
%principle that one can use a smallness assumption on the fractional maximal operator to obtain quantitative decay estimates on the distribution function of the potential operator.  This fundamental result was proved by Muckenhoupt and Wheeden \cite{MW}, and is known as the 
distributional \textit{good-$\lambda$ inequality} of Muckenhoupt and Wheeden \cite{MW}.  Here we consider an extension and sharpening of the result of \cite{MW}, along with applications to certain exponential integrability results.  Our primary theorem is the following (for the definition of weak $A_{\infty}$ weights, see Definition \ref{ainfdefin} below):

\begin{thm}\label{contthm} Let  $0<\alpha<n$ and $0<q<\infty$, and suppose that $\sigma$ is a weak $A_{\infty}$ weight.  Then there exist constants $\tau>1$ and $C,\, c>0$ depending on $n, \,q,\,\alpha$ and the weak $A_{\infty}$ character of $\sigma$, such that for each $\lambda>0$ and $0<\epsilon<1$, it follows:
\begin{equation}\label{contlambda}\begin{split}
|\{x\in \mathbf{R}^n\, :\, \mathbf{T}(\mu)(x) > \tau\lambda, & \; \mathbf{M}_{\alpha}(\mu)(x) \leq \epsilon \lambda\}|_{\sigma}\\
& \leq C e^{-c/\epsilon^q}|\{x\in \mathbf{R}^n\, :\, \mathbf{T}(\mu)(x) > \lambda\}|_{\sigma}.
\end{split}\end{equation}
Here $|E|_{\sigma}$ denotes the $\sigma$ measure of a set $E$.
\end{thm}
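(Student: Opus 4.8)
The plan is to run a Calder\'on--Zygmund/Whitney argument sharp enough to produce the exponential gain, the new feature being that the truncated potential is not merely small in $L^{1}$ on each Whitney cube but \emph{exponentially integrable} there, at the scale $(\epsilon\lambda)^{q}$.

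First, the reduction. Since $x\mapsto\mu(B(x,r))$ is lower semicontinuous for each fixed $r$, the function $\mathbf{T}(\mu)$ is lower semicontinuous and $\Omega_{\lambda}:=\{\mathbf{T}(\mu)>\lambda\}$ is open; take a Whitney decomposition $\Omega_{\lambda}=\bigcup_{j}Q_{j}$ with $2Q_{j}\subseteq\Omega_{\lambda}$ and the dilates $\{2Q_{j}\}$ of bounded overlap, and put $\ell_{j}=\ell(Q_{j})$. Since $\{\mathbf{T}(\mu)>\tau\lambda\}\subseteq\Omega_{\lambda}$, it suffices to prove the Lebesgue-measure bound
\[
\bigl|\{x\in Q_{j}:\mathbf{T}(\mu)(x)>\tau\lambda,\ \mathbf{M}_{\alpha}(\mu)(x)\le\epsilon\lambda\}\bigr|\le C e^{-c/\epsilon^{q}}|Q_{j}|
\]
for every $j$: applying the weak $A_{\infty}$ property of $\sigma$ on each $Q_{j}$ and summing, using $\sum_{j}\sigma(2Q_{j})\lesssim\sigma(\Omega_{\lambda})$ (bounded overlap), yields \eqref{contlambda} with a smaller value of $c$. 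Fix $Q=Q_{j}$, set $\ell=\ell_{j}$, and choose $x^{\ast}\notin\Omega_{\lambda}$ with $\operatorname{dist}(x^{\ast},Q)\lesssim\ell$, so $\mathbf{T}(\mu)(x^{\ast})\le\lambda$. For $x\in Q$ and $r\ge K\ell$ with $K=K(n)$ large, $B(x,r)\subseteq B(x^{\ast},2r)$, so a change of variables gives $\int_{K\ell}^{\infty}(\mu(B(x,r))/r^{n-\alpha})^{q}\,\tfrac{dr}{r}\le 2^{q(n-\alpha)}\mathbf{T}(\mu)(x^{\ast})^{q}\le C_{1}\lambda^{q}$; hence, choosing $\tau$ with $\tau^{q}>2C_{1}$, the hypothesis $\mathbf{T}(\mu)(x)>\tau\lambda$ forces
\[
A(x):=\int_{0}^{K\ell}\Bigl(\frac{\mu(B(x,r))}{r^{n-\alpha}}\Bigr)^{q}\frac{dr}{r}>\frac{\tau^{q}}{2}\lambda^{q}.
\]

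It remains to bound $|\{x\in Q:A(x)>\tfrac{\tau^{q}}{2}\lambda^{q},\ \mathbf{M}_{\alpha}(\mu)(x)\le\epsilon\lambda\}|$, and the crux is the following family of moment estimates. Since $B(x,r)\subseteq\kappa Q$ for $x\in Q$, $r\le K\ell$ (with $\kappa=\kappa(n)$), we may replace $\mu$ by $\nu:=\mathds{1}_{\kappa Q}\,\mu$ in the definition of $A$; and if the set in question is nonempty, testing $\mathbf{M}_{\alpha}(\mu)\le\epsilon\lambda$ at one of its points on a ball of radius $\simeq\ell$ gives $\|\nu\|\le C\epsilon\lambda\,\ell^{n-\alpha}$. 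Now fix an integer $p\ge1$, expand $A(x)^{p}$ as a symmetric integral over $0<r_{1}<\cdots<r_{p}<K\ell$, integrate over $x$ in $Q\cap\{\mathbf{M}_{\alpha}(\mu)\le\epsilon\lambda\}$, bound the $p-1$ factors with $i\ge2$ by $(\nu(B(x,r_{i}))/r_{i}^{n-\alpha})^{q}\le(\epsilon\lambda)^{q}$, and estimate the remaining factor using Tonelli's theorem in the form $\int_{Q}\nu(B(x,r_{1}))\,dx\le c_{n}r_{1}^{\,n}\|\nu\|$ (for $q\ge1$ after first extracting one more power $(\epsilon\lambda)^{q-1}$ from it; for $0<q<1$ via H\"older's inequality). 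Combining this with the elementary identity $\int_{0<r_{1}<\cdots<r_{p}<R}r_{1}^{q\alpha}\,\tfrac{dr_{1}}{r_{1}}\cdots\tfrac{dr_{p}}{r_{p}}=R^{q\alpha}/(q\alpha)^{p}$, the bound $\|\nu\|\le C\epsilon\lambda\,\ell^{n-\alpha}$, and $|Q|=\ell^{n}$, all powers of $\ell$ cancel and one obtains
\[
\frac{1}{|Q|}\int_{Q\cap\{\mathbf{M}_{\alpha}(\mu)\le\epsilon\lambda\}}A(x)^{p}\,dx\le C_{2}\,p!\,\bigl(C_{3}(\epsilon\lambda)^{q}\bigr)^{p},\qquad p\ge1,
\]
with $C_{2},C_{3}$ depending only on $n,\alpha,q$.

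Summing the Taylor series of the exponential, these moment bounds give $\tfrac{1}{|Q|}\int_{Q\cap\{\mathbf{M}_{\alpha}(\mu)\le\epsilon\lambda\}}\exp\!\bigl(A(x)/(2C_{3}(\epsilon\lambda)^{q})\bigr)\,dx\le 1+C_{2}$, whence by Chebyshev's inequality $|\{x\in Q:A(x)>t,\ \mathbf{M}_{\alpha}(\mu)(x)\le\epsilon\lambda\}|\le(1+C_{2})|Q|e^{-t/(2C_{3}(\epsilon\lambda)^{q})}$ for all $t>0$; taking $t=\tfrac{\tau^{q}}{2}\lambda^{q}$ gives exactly the local estimate of the first paragraph, with $c=\tau^{q}/(4C_{3})$. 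Feeding this back through the weak $A_{\infty}$ property and the Whitney summation completes the proof. The one genuinely new point, compared with the Muckenhoupt--Wheeden argument, is the exponential (rather than polynomial) dependence on $\epsilon$: a bare Chebyshev estimate applied to $\int_{Q}A$ only produces a factor $\epsilon^{q}$, and the improvement to $e^{-c/\epsilon^{q}}$ is forced by the logarithmic, John--Nirenberg-type nature of the $\tfrac{dr}{r}$ averaging, which makes the full sequence of moments of $A$ grow only like $p!$. The remaining ingredients --- lower semicontinuity of $\mathbf{T}(\mu)$, the split into the cases $q\ge1$ and $0<q<1$ in the moment estimate, and the bookkeeping in the weak $A_{\infty}$ summation --- are routine.
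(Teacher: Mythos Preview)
Your proof is correct and takes a genuinely different route from the paper's argument. Both begin identically: Whitney decompose $\{\mathbf{T}(\mu)>\lambda\}$, fix a cube $Q$, use a nearby point $x^{\ast}$ in the complement to bound the tail $\int_{K\ell}^{\infty}$ by $C_{1}\lambda^{q}$, and reduce to a Lebesgue-measure estimate on $Q$ which is then fed through the weak $A_{\infty}$ property. The divergence is in how the local piece $A(x)=\int_{0}^{K\ell}(\mu(B(x,r))/r^{n-\alpha})^{q}\,\tfrac{dr}{r}$ is handled. The paper argues at the level of individual scales: it peels off $m\approx\epsilon^{-q}$ intermediate scales (each contributing at most $(\epsilon\lambda)^{q}$ on the set where $\mathbf{M}_{\alpha}(\mu)\le\epsilon\lambda$), and controls the remaining small scales by a union bound on the level sets of the single-scale functions $g_{k}$, using that $\int_{Q}g_{k}\lesssim 2^{(k-j_{0})\alpha}|Q|\epsilon\lambda$; the final bound is $|S|\lesssim 2^{-\alpha m}|Q|$. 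You instead prove a full moment bound $\tfrac{1}{|Q|}\int A^{p}\le C_{2}\,p!\,(C_{3}(\epsilon\lambda)^{q})^{p}$ by expanding $A^{p}$ as an ordered integral, bounding all but the smallest radius by the maximal function, and using the identity $\int_{0<r_{1}<\cdots<r_{p}<R}r_{1}^{\beta}\,\tfrac{dr_{1}}{r_{1}}\cdots\tfrac{dr_{p}}{r_{p}}=R^{\beta}/\beta^{p}$; summing gives local exponential integrability and hence the decay via Chebyshev. Your approach is essentially a direct John--Nirenberg argument and has the pleasant feature of yielding the local exponential integrability (the content of Theorem~\ref{morremb}) \emph{en route} to the good-$\lambda$ inequality, whereas the paper proves the good-$\lambda$ inequality first and deduces Theorem~\ref{morremb} afterwards. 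The paper's approach, on the other hand, is slightly more elementary (no moment series) and in the dyadic case yields an explicit exponent. Two small points: the displayed identity with $r_{1}^{q\alpha}$ literally matches your $0<q<1$ case; for $q\ge1$ the exponent is $\alpha$, though the structural conclusion is identical. Also, you should note (as the paper does) that one may assume $\Omega_{\lambda}\ne\mathbf{R}^{n}$, e.g.\ by first reducing to compactly supported $\mu$, so that the Whitney decomposition is available.
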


The main observation of this paper is the growth of the display (\ref{contlambda}) in the parameter $\epsilon$.  Exploiting this improvement in the constant allows us to deduce certain exponential integrability results for nonlinear potentials (Theorem \ref{morremb} below).  In the classical inequality of \cite{MW}, one replaces the exponential constant in (\ref{contlambda}) with the polynomial factor $\epsilon^{n/(n-\alpha)}$.  It is shown below that the inequality (\ref{contlambda}) is sharp in terms of the power of $\epsilon$, see Proposition \ref{sharpprop}.  The exact behavior of the parameter $\epsilon$ in such distributional inequalities is topic which has attracted the interest of several authors, see e.g. Banuelos \cite{B1}; Chang, Wilson and Wolff \cite{CWW} and references therein.

To illustrate the principle ideas in the proof of Theorem \ref{contthm}, we present the proof first for dyadic operators in the unweighted case.  In this set-up, one can give a simple upper bound on the exponential constant in (\ref{contthm}) explicitly, see Theorem \ref{mainthm} below.    

Theorem \ref{contthm} provides an elementary treatement of several useful inequalities in harmonic analysis and potential theory.    Indeed, from integrating out the good-$\lambda$ inequality in Theorem \ref{contthm} in the standard fashion, see e.g. \cite{MW, AH}, one readily recovers the inequality of Jawerth, Perez and Welland \cite{JPW}.  This states that for each weak $A_{\infty}$ weight $\sigma$, and for indices $0<p<\infty$, $0<\alpha<n$ and $0<q<\infty$, it follows that:
\begin{equation}\label{liztri}\Bigl(\int_{\mathbf{R}^n}(\mathbf{T}^q_{\alpha}(\mu)(x))^pd\sigma \Bigl)^{1/p} \leq C(\sigma,q,\alpha, p) \Bigl(\int_{\mathbf{R}^n} (\mathbf{M}_{\alpha}(\mu)(x))^p d\sigma\Bigl)^{1/p},
\end{equation}
for all positive measures $\mu$ (the case $q>1$ is contained in the original result of \cite{MW}).

The inequality (\ref{liztri}) was first proved (for the usual Muckenhoupt $A_{\infty}$ class) in \cite{JPW} by means of local maximal functions.  An alternative approach was found by Cohn and Verbitsky \cite{CV}, which employed techniques developed by Wolff \cite{HW}.

Furthermore, from Theorem \ref{contthm} one can obtain the natural end point of (\ref{liztri}) at $q=\infty$.  This comes courtesy of the exponential nature of the constant in (\ref{contlambda}), and concerns the exponential integrability of $\mathbf{T}(\mu)$, when $\mu$ lies in a suitable local Morrey space.  The following theorem extends to non-linear potential operators certain well known Morrey space embeddings for Riesz potentials due to D. Adams \cite{A1}, who in turn was building on the work of Stampacchia \cite{St1}.

Let us fix the notation that for a set $E\subset\mathbf{R}^n$, we define the restriction of a measure $\mu$ to $E$ by $d\mu_E = \chi_E d\mu.$

\begin{thm}\label{morremb}  Let $\sigma$ be a weak $A_{\infty}$ weight.  There exist finite positive constants $C, c>0$ depending on $n,q,$ $\alpha$ and the weak $A_{\infty}$ character of $\sigma$, so that:
\begin{equation}\label{expint}
\frac{1}{|2B|_{\sigma}}\int_{2B} \exp\Bigl(c\frac{\mathbf{T}^q_{\alpha}(\mu_{B})(x)}{||\mathbf{M}_{\alpha}(\mu_B)||_{L^{\infty}(B)}}\Bigl)^q d\sigma \leq C,
\end{equation}
for all balls $B\subset\mathbf{R}^n$ so that $||\mathbf{M}_{\alpha}(\mu_B)||_{L^{\infty}(B)}<\infty$.
\end{thm}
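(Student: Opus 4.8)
The plan is to obtain Theorem~\ref{morremb} from the good-$\lambda$ inequality of Theorem~\ref{contthm} by iterating it along a geometric sequence of heights and summing the resulting layer-cake decomposition. Since $\mathbf{T}^q_{\alpha}$ and $\mathbf{M}_{\alpha}$ are each positively $1$-homogeneous in $\mu$ and $(t\mu)_B=t\mu_B$, it suffices to treat the case $\|\mathbf{M}_{\alpha}(\mu_B)\|_{L^{\infty}(B)}=1$; write $B=B(x_0,R)$ and, using the convention of the introduction, abbreviate $\mathbf{T}=\mathbf{T}^q_{\alpha}$.

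The argument rests on two elementary localization facts for $\mu_B$. First, $\mathbf{M}_{\alpha}(\mu_B)(x)\le 2^{n-\alpha}$ for every $x\in\mathbf{R}^n$: if a ball $B(x,r)$ meets $B$, pick $y\in B(x,r)\cap B$, note $B(x,r)\subset B(y,2r)$, and bound $\mu_B(B(x,r))/r^{n-\alpha}\le 2^{n-\alpha}\mathbf{M}_{\alpha}(\mu_B)(y)\le 2^{n-\alpha}$ since $y\in B$. Second, $\mathbf{T}(\mu_B)(x)\le\Lambda_0:=((n-\alpha)q)^{-1/q}$ for all $x\notin 2B$: only radii $r\ge\mathrm{dist}(x,B)\ge R$ contribute to $\mathbf{T}(\mu_B)(x)$, and since $\mu(B)\le\mathbf{M}_{\alpha}(\mu_B)(x_0)R^{n-\alpha}\le R^{n-\alpha}$, a direct evaluation of $\int_{\mathrm{dist}(x,B)}^{\infty}(\mu(B)/r^{n-\alpha})^q\,dr/r$ gives the bound. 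Fixing $\lambda_0:=\max\{2^{n-\alpha+1},\Lambda_0\}$, the second fact gives $\{x:\mathbf{T}(\mu_B)(x)>\lambda_0\}\subset 2B$, hence $|\{\mathbf{T}(\mu_B)>\lambda_0\}|_{\sigma}\le|2B|_{\sigma}$, which will seed the iteration.

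Next I would apply Theorem~\ref{contthm} at the heights $\lambda_k:=\tau^k\lambda_0$ ($k\ge0$) with $\epsilon=\epsilon_k:=2^{n-\alpha}/\lambda_k$, noting $0<\epsilon_k\le\tfrac12<1$. By the first localization fact the side condition $\mathbf{M}_{\alpha}(\mu_B)\le\epsilon_k\lambda_k=2^{n-\alpha}$ holds everywhere on $\mathbf{R}^n$, so (\ref{contlambda}) yields
\begin{equation*}
|\{\mathbf{T}(\mu_B)>\lambda_{k+1}\}|_{\sigma}\le C\,e^{-c(\lambda_0/2^{n-\alpha})^q\,\tau^{kq}}\,|\{\mathbf{T}(\mu_B)>\lambda_k\}|_{\sigma}.
\end{equation*}
Telescoping from the base estimate above and using $\sum_{j=0}^{k-1}\tau^{jq}\ge\tau^{(k-1)q}$ gives, for all $k\ge1$,
\begin{equation*}
|\{\mathbf{T}(\mu_B)>\lambda_k\}|_{\sigma}\le C^k\,e^{-c'\tau^{(k-1)q}}\,|2B|_{\sigma},\qquad c':=c\,(\lambda_0/2^{n-\alpha})^q>0;
\end{equation*}
in particular $\mathbf{T}(\mu_B)<\infty$ $\sigma$-a.e.

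Finally I would estimate $\int_{2B}\exp(\delta\,\mathbf{T}(\mu_B)^q)\,d\sigma$ (with $\delta>0$ to be fixed) by splitting the range of $\mathbf{T}(\mu_B)$ into $[0,\lambda_0]$ and the blocks $[\lambda_k,\lambda_{k+1})$, $k\ge0$. On the $k$-th block the integrand is at most $e^{\delta\lambda_0^q\tau^{(k+1)q}}$ while the $\sigma$-measure of the relevant portion of $2B$ is at most $|\{\mathbf{T}(\mu_B)>\lambda_k\}|_{\sigma}$; hence, using the decay above for $k\ge1$ (and $|2B|_{\sigma}$ for $k=0$), the $k$-th block contributes at most a fixed multiple of $C^k\exp\!\big(\tau^{(k-1)q}(\delta\lambda_0^q\tau^{2q}-c')\big)|2B|_{\sigma}$. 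Choosing $\delta$ so small that $\delta\lambda_0^q\tau^{2q}\le c'/2$ bounds this by $C^ke^{-(c'/2)\tau^{(k-1)q}}|2B|_{\sigma}$, and since $\tau>1$ the sum over $k$, together with the trivial $[0,\lambda_0]$ contribution, is at most a constant multiple of $|2B|_{\sigma}$ depending only on $n,q,\alpha$ and the weak $A_{\infty}$ character of $\sigma$. Thus $\frac{1}{|2B|_{\sigma}}\int_{2B}\exp(\delta\,\mathbf{T}(\mu_B)^q)\,d\sigma\le C$, and undoing the normalization---applying this with $\mu$ replaced by $\mu/\|\mathbf{M}_{\alpha}(\mu_B)\|_{L^{\infty}(B)}$ and using the $1$-homogeneity of $\mathbf{T}^q_{\alpha}$, with $c=\delta^{1/q}$---gives precisely (\ref{expint}). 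The one genuinely delicate point is the localization in the second paragraph: it is what permits the iteration to be seeded by $|2B|_{\sigma}$ rather than by an a priori infinite quantity, after which the result is a routine integration of the good-$\lambda$ inequality.
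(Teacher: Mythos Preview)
Your proof is correct and follows essentially the same approach as the paper: normalize, observe that $\mathbf{T}(\mu_B)$ is bounded off $2B$ so that all superlevel sets of interest sit in $2B$, then feed this localization into the good-$\lambda$ inequality (\ref{contlambda}) and integrate. The only cosmetic difference is that the paper applies (\ref{contlambda}) just once per level---using the trivial bound $|\{\mathbf{T}(\mu_B)>\lambda\}|_{\sigma}\le|2B|_{\sigma}$ directly---rather than telescoping, and your treatment of the global bound $\mathbf{M}_{\alpha}(\mu_B)\le 2^{n-\alpha}$ is in fact more careful than the paper's, which tacitly assumes the side condition holds on all of $2B$.
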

In display (\ref{expint}) and elsewhere, $2B$ denotes the concentric double of $B$.  The exponent in (\ref{expint}) is sharp, see Remark \ref{expsharp}.  Theorem \ref{morremb} follows from Theorem \ref{contthm} and a simple localization, and is carried out in Section \ref{expsec}.  We remark that certain exponential integrability results for linear integral operators can be obtained via the re-arranged good-$\lambda$ inequality introduced in Bagby and Kurtz \cite{BK} and generalized by Vyb\'{i}ral \cite{Vyb}.

In addition, we offer an alternative proof of Theorem \ref{morremb} that goes via a distributional inequality in the parameter $\tau$ appearing in (\ref{contlambda}).  This may be of independent interest, and is the content of the following result:

\begin{thm}\label{goodtauthm} Let $0<\alpha<n$ and $0<q<\infty$, and suppose that $\sigma$ is a weak $A_{\infty}$ weight.  Then there exist constants $c>1$ and $0<C<1$, depending on $n, \,q,\,\alpha$ and the $A_{\infty}$ character of $\sigma$, such that for each $\lambda>0$ and $0<\epsilon<1$, it follows:
\begin{equation}\label{goodtau}\begin{split}
|\{x\in \mathds{R}^n\, :\, (\mathbf{T}_{\alpha}^q(\mu))^q(x) > (1+c\epsilon)\lambda, & \; (\mathcal{M}_{\alpha}(\mu))^q(x) \leq \epsilon \lambda\}|_{\sigma} \\& \leq C|\{x\in \mathds{R}^n\, :\, (\mathbf{T}_{\alpha}^q(\mu))^q(x) > \lambda\}|_{\sigma}.
\end{split}\end{equation}
\end{thm}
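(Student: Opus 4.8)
Write $F=(\mathbf{T}^{q}_{\alpha}(\mu))^{q}$, so $F(x)=\int_{0}^{\infty}\bigl(\mu(B(x,r))\,r^{\alpha-n}\bigr)^{q}\tfrac{dr}{r}$, put $G=(\mathcal{M}_{\alpha}(\mu))^{q}$ and $\Omega_{\lambda}=\{F>\lambda\}$. One may assume $|\Omega_{\lambda}|_{\sigma}<\infty$ and $\{G\le\epsilon\lambda\}\neq\varnothing$, since otherwise \eqref{goodtau} is trivial. As $x\mapsto\mu(B(x,r))$ is lower semicontinuous, so is $F$, and $\Omega_{\lambda}$ is open; the plan is to fix a Whitney decomposition $\Omega_{\lambda}=\bigcup_{j}Q_{j}$ into dyadic cubes with disjoint interiors and $\mathrm{diam}\,Q_{j}\le\mathrm{dist}(Q_{j},\mathbf{R}^{n}\setminus\Omega_{\lambda})\le C_{n}\ell(Q_{j})$, and for each $j$ a point $\hat{x}_{j}\notin\Omega_{\lambda}$ with $|\hat{x}_{j}-x|\le C_{n}\ell(Q_{j})$ for all $x\in Q_{j}$, so that $F(\hat{x}_{j})\le\lambda$. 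The whole statement will be reduced to the single--cube (Lebesgue-measure) estimate
\begin{equation}\label{locest}
\bigl|\{x\in Q_{j}:F(x)>(1+c\epsilon)\lambda,\ G(x)\le\epsilon\lambda\}\bigr|\le\theta\,|Q_{j}|,
\end{equation}
which I will prove to hold with $\theta=\theta(n,q,\alpha,c)$ as small as we wish once $c$ is taken large. Granting \eqref{locest}, one invokes the defining property of weak $A_{\infty}$ weights (Definition \ref{ainfdefin}) on each $Q_{j}$ to replace $|\cdot|$ by $|\cdot|_{\sigma}$ at the cost of a fixed dilation of $Q_{j}$, and sums over $j$ using the finite overlap of the dilated Whitney cubes, exactly as in \cite{MW}; choosing $\theta$ small then gives \eqref{goodtau} with a constant $C<1$.

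To prove \eqref{locest} I discard the cubes with $Q_{j}\cap\{G\le\epsilon\lambda\}=\varnothing$ and fix $y_{j}\in Q_{j}$ with $G(y_{j})\le\epsilon\lambda$ in the rest. Set $R_{j}=C_{n}'\ell(Q_{j})$ with $C_{n}'$ large enough that $B(x,s)\cup B(\hat{x}_{j},s)\subset B(y_{j},2s)$ whenever $x\in Q_{j}$ and $s\ge R_{j}$; then, using $\mathcal{M}_{\alpha}(\mu)(y_{j})\le(\epsilon\lambda)^{1/q}$, one has $\mu(B(x,s))+\mu(B(\hat{x}_{j},s))\le C_{n}(\epsilon\lambda)^{1/q}s^{\,n-\alpha}$ for such $s$, and $\mu(Q_{j}^{(r)})\le C(\epsilon\lambda)^{1/q}\ell(Q_{j})^{\,n-\alpha}$ for $r\le\ell(Q_{j})$, where $Q_{j}^{(r)}$ is the $r$-neighbourhood of $Q_{j}$. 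Split $F=F_{j}^{\mathrm{loc}}+F_{j}^{\mathrm{tail}}$, the pieces being the integrals over $(0,R_{j})$ and $(R_{j},\infty)$. For the tail, using $\mu(B(x,r))\le\mu(B(\hat{x}_{j},r+d_{j}))$ with $d_{j}=|x-\hat{x}_{j}|\le R_{j}$, the substitution $s=r+d_{j}$, and $(1-d_{j}/s)^{-q(n-\alpha)-1}\le 1+Cd_{j}/s$ for $s\ge 2d_{j}$, I obtain for $x\in Q_{j}$
\begin{equation*}
F_{j}^{\mathrm{tail}}(x)\le\int_{R_{j}}^{\infty}\Bigl(\frac{\mu(B(\hat{x}_{j},s))}{s^{\,n-\alpha}}\Bigr)^{q}\frac{ds}{s}+Cd_{j}\int_{R_{j}}^{\infty}\Bigl(\frac{\mu(B(\hat{x}_{j},s))}{s^{\,n-\alpha}}\Bigr)^{q}\frac{ds}{s^{2}}\le F(\hat{x}_{j})+Cd_{j}\,\epsilon\lambda\int_{R_{j}}^{\infty}\frac{ds}{s^{2}}\le(1+C_{1}\epsilon)\lambda,
\end{equation*}
the error integrand being dominated by $\epsilon\lambda$ via the bound on $\mu(B(\hat{x}_{j},s))$ at scales $s\ge R_{j}$, and $d_{j}\le R_{j}$.

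Consequently, on the set in \eqref{locest} one has $F_{j}^{\mathrm{loc}}(x)>(c-C_{1})\epsilon\lambda$, while $G(x)\le\epsilon\lambda$ forces $(\mu(B(x,r))r^{\alpha-n})^{q}\le\epsilon\lambda$ for every $r$; so it suffices to estimate the measure of $\{x\in Q_{j}:H_{j}(x)>(c-C_{1})\epsilon\lambda\}$, where $H_{j}(x)=\int_{0}^{R_{j}}\min\bigl((\mu(B(x,r))r^{\alpha-n})^{q},\epsilon\lambda\bigr)\tfrac{dr}{r}$. Integrating in $x$, using $\int_{Q_{j}}\mu(B(x,r))\,dx=\int|B(y,r)\cap Q_{j}|\,d\mu(y)\le\omega_{n}r^{n}\mu(Q_{j}^{(r)})$, the truncation at height $\epsilon\lambda$ (for $q\ge1$) or Jensen's inequality (for $0<q<1$), and the bound on $\mu(Q_{j}^{(r)})$, a routine computation — in which the resulting $r$-integrand is $\epsilon\lambda\,|Q_{j}|$ times an integrable power ($r^{q\alpha-1}$, resp.\ $r^{\alpha-1}$, over $(0,\ell(Q_{j}))$, and $O(1)$ over the band $(\ell(Q_{j}),R_{j})$) — gives the Carleson bound $\int_{Q_{j}}H_{j}\,dx\le C_{2}\epsilon\lambda\,|Q_{j}|$. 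By Chebyshev the set in \eqref{locest} has measure at most $\bigl(C_{2}/(c-C_{1})\bigr)|Q_{j}|$, so \eqref{locest} holds with $\theta=C_{2}/(c-C_{1})$, which is made as small as we please, in particular small enough for the weak $A_{\infty}$ reduction of the first paragraph to return \eqref{goodtau} with $C<1$, by enlarging $c$.

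The hard point — and the reason the statement insists on the \emph{linear} threshold $(1+c\epsilon)\lambda$ — is that no logarithmic loss is permitted: if \eqref{goodtau} held only with $(1+c\epsilon\log(1/\epsilon))\lambda$, iterating it $\sim\epsilon^{-1}$ times would produce a constant in \eqref{contlambda} weaker than $e^{-c/\epsilon^{q}}$, contradicting the sharpness in Proposition \ref{sharpprop}. Avoiding the logarithm is exactly why (i) the splitting radius $R_{j}$ must be only a \emph{fixed} multiple of $\ell(Q_{j})$, not $\sim\ell(Q_{j})/\epsilon$ as a crude centre-comparison would suggest, which works only because the perturbation term in the tail estimate decays like $\int_{R_{j}}^{\infty}d_{j}s^{-2}\,ds$ once the scales $s\gtrsim\ell(Q_{j})$ have been tamed using $G(y_{j})\le\epsilon\lambda$; and (ii) the Carleson bound for $H_{j}$ must itself be linear in $\epsilon$, which is where the truncation (resp.\ Jensen) is indispensable — without it the contribution of $F_{j}^{\mathrm{loc}}$ over the band $\ell(Q_{j})<r<R_{j}$ carries an unwanted factor $\log(R_{j}/\ell(Q_{j}))$. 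The weighted reduction via weak $A_{\infty}$, by contrast, is routine and does not affect any of the exponents.
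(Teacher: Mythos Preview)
Your argument is correct and follows the same overall architecture as the paper's proof: Whitney decomposition of $\{(\mathbf{T}(\mu))^q>\lambda\}$, a tail estimate at scales $r\gtrsim\ell(Q_j)$ via centre-shift to a point where $F\le\lambda$ with the error controlled by $\mathcal{M}_\alpha(\mu)(y_j)\le(\epsilon\lambda)^{1/q}$, then a local estimate at small scales, and finally the weak $A_\infty$ summation. Your tail bound, done by the substitution $s=r+d_j$ and a first-order expansion of $(1-d_j/s)^{-(n-\alpha)q-1}$, is the continuous version of the paper's dyadic splitting of $[r_0,\infty)$ into shells $[2^kr_0,2^{k+1}r_0)$ and the summable error $s(k)=(1+A2^{-k})^{(n-\alpha)q}-1$; the two are equivalent.

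The genuine difference is in the local part. The paper bounds $\int_0^{r_0}(\mu(B(x,r))r^{\alpha-n})^q\,dr/r$ pointwise by $C r_0^{\alpha q}(M\mu^j(x))^q$ with $\mu^j=\mu\chi_{3Q_j}$, and then invokes the weak type $(1,1)$ inequality for the Hardy--Littlewood maximal operator together with the mass bound $\|\mu^j\|\le C(\epsilon\lambda)^{1/q}r_0^{\,n-\alpha}$. You instead truncate the integrand at $\epsilon\lambda$, integrate $H_j$ over $Q_j$ directly via Fubini and the inequality $\min(a^q,\epsilon\lambda)\le a(\epsilon\lambda)^{(q-1)/q}$ (for $q\ge1$) or Jensen (for $q<1$), obtain the Carleson-type bound $\int_{Q_j}H_j\le C\epsilon\lambda|Q_j|$, and conclude by Chebyshev. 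Your route avoids any appeal to the maximal theorem and is in that sense more self-contained, at the price of a small case distinction in $q$; the paper's route is shorter because it offloads the work to a classical inequality. Both yield \eqref{locest} with $\theta\sim 1/c$, which is exactly what is needed. One minor omission: you should reduce to compactly supported $\mu$ (as the paper does) to guarantee $\Omega_\lambda\ne\mathbf{R}^n$ so that the Whitney decomposition is available.
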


%The use of nonlinear potentials in the inequality (\ref{contlambda}) also provides an elementary proof of the useful fact that the positive cone in certain weighted Triebel-Lizorkin spaces does not depend on the sequence space parameter, a result originally due to Jawerth, Perez and Welland \cite{JPW}.  More precisely, for each weak $A_{\infty}$ weight $\sigma$, and for indices $0<p<\infty$, $0<\alpha<n$ and $0<q<\infty$, it follows that:
%\begin{equation}\label{liztri}\Bigl(\int_{\mathbf{R}^n}(\mathbf{T}^q_{\alpha}(\mu)(x))^pd\sigma \Bigl)^{1/p} \leq C(\sigma,q,\alpha, p) \Bigl(\int_{\mathbf{R}^n} (\mathbf{M}_{\alpha}(\mu)(x))^p d\sigma\Bigl)^{1/p},
%\end{equation}
%for all positive measures $\mu$.  The inequality (\ref{liztri}) was first proved (for the usual Muckenhoupt $A_{\infty}$ class) in \cite{JPW} by means of local maximal functions.  An alternative approach was found by Cohn and Verbitsky \cite{CV}, which employed techniques developed by Wolff \cite{HW}.   Theorem \ref{contthm} yields a third approach, as (\ref{liztri}) is an immediate consequence of integrating out the good-$\lambda$ inequality in the standard fashion, see e.g. \cite{MW, AH}.

Before we describe further consequences of Theorem \ref{contthm} in terms of regularity of solutions to certain elliptic equations, we turn to the definition of weak $A_{\infty}$ weights:

\begin{defn}\label{ainfdefin}A nonnegative measurable function $\sigma$ is said to be a weak $A_{\infty}$weight, if there exist constants $C_{\sigma}>0$ and $\theta>0$ so that for each cube $Q$ and measurable set $E\subset Q$:
\begin{equation}\label{defainf}
\frac{|E|_{\sigma}}{|2Q|_{\sigma}}\leq C_{\sigma}\Bigl(\frac{|E|}{|Q|}\Bigl)^{\theta}.
\end{equation}
A constant is said to depend on the weak $A_{\infty}$ character of $\sigma$ if it depends on $\theta$ and $C_{\sigma}$.

This class of weights was introduced by Sawyer \cite{S1}.  Note that weak $A_{\infty}$ weights may vanish on open sets, and as a result are not necessarily doubling.
\end{defn}

By combining Theorem \ref{morremb} with the deep gradient estimates recently proved by Duzaar and Mingione \cite{DM2, DM3} (see Theorem \ref{gradbounds} below), one can obtain exponential integrability results for the gradient of certain quasilinear equations.  For the sake of brevity, we will only consider a simple model equation, but the same results hold for quasilinear operators in the generality considered in \cite{DM2}.  For $1<p<\infty$, let us define the $p$-Laplacian operator by:
$$\Delta_p u = \text{div}(|\nabla u|^{p-2}\nabla u). 
$$
Let $\Omega$ be a bounded domain, and $\mu$ a finite nonnegative measure defined on $\Omega$.  We consider a class of very weak solutions\footnote{The reader with some exposure of (\ref{dirprob}) can replace this notion of weak solution with their personal favorite, for example \textit{renormalized solutions} \cite{DMMOP}, or \textit{p-superharmonic solutions} see \cite{HKM}.  We make the definition here to reduce technicalities.} $u$ (see Definition \ref{vweak}) of the equation:
\begin{equation}\label{dirprob}-\Delta_p u =\mu \text{ in }\Omega.
\end{equation}
%We may consider any suitable notion of solution of (\ref{dirprob}), for instance solutions in the $p$-superharmonic sense \cite{HKM}, or alternatively renormalized solutions \cite{DMMOP}, this follows from a well know approximation procedure, 

\begin{thm}\label{quasthm}  Suppose $u$ is a very weak solution of (\ref{dirprob}).

\indent (i).  If $2-1/n<p\leq 2$, then there exist constants $c,\,C>0$ depending on $n$ and $p$ so that:
\begin{equation}\label{pleq2bd}
\dashint_B \exp\Bigl(c\frac{|\nabla u|^{p-1}}{||\mathbf{M}_{1}(\mu_{2B})||_{L^{\infty}(2B)}}\Bigl)dx\leq C \exp\Bigl(c\dashint_B\frac{|\nabla u| \, dx}{||\mathbf{M}_{1}(\mu_{2B})||_{L^{\infty}(2B)}^{\frac{1}{p-1}}}\Bigl)^{p-1}
\end{equation}
for all balls $B$ so that $2B\subset\Omega$ and $||\mathbf{M}_{1}(\mu_{2B})||_{L^{\infty}(2B)}<\infty$.

%\begin{equation}\label{pleq2bd}
%\dashint_B \exp\Bigl(c\frac{|\nabla u|^{p-1}}{||\mathbf{M}_{\alpha}(\mu_{2B})||_{L^{\infty}(2B)}}\Bigl)dx\leq C(n,p),
%\end{equation}
\indent (ii). If $p\geq 2$, then there exist constants $C,\, c>0$ depending on $n$ and $p$ so that:
\begin{equation}\label{pgeq2bd}
\dashint_B \exp\Bigl(c\frac{|\nabla u|^{\frac{p}{2}}}{||\mathbf{M}_{1}(\mu_{2B})||_{L^{\infty}(2B)}^{\frac{p}{2(p-1)}}}\Bigl)dx\leq C \exp\Bigl(c\dashint_B\frac{|\nabla u|^{\frac{p}{2}} \,dx}{||\mathbf{M}_{1}(\mu_{2B})||_{L^{\infty}(2B)}^{\frac{p}{2(p-1)}}}\Bigl).
\end{equation}
for all balls $B$ so that $2B\subset\Omega$ and $||\mathbf{M}_{1}(\mu_{2B})||_{L^{\infty}(2B)}<\infty$.
%\begin{equation}\label{pgeq2bd}
%\dashint_B \exp\Bigl(c\frac{|\nabla u|^{p/2}}{||\mathbf{M}_{\alpha}(\mu_{2B})||^{p/2(p-1)}_{L^{\infty}(2B)}}\Bigl)dx\leq C(n,p).
%\end{equation}
\end{thm}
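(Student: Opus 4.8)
The plan is to reduce Theorem \ref{quasthm} to Theorem \ref{morremb} by means of the pointwise gradient estimates of Duzaar and Mingione. The starting point is the nonlinear Wolff-type bound for the gradient of a very weak solution of \eqref{dirprob}: in the range $2-1/n<p\le 2$ one has (for a.e.\ $x$ in a ball $B$ with $2B\subset\Omega$)
\begin{equation*}
|\nabla u(x)|\le C\Bigl(\mathbf{T}^{1}_{1}(\mu_{2B})(x)\Bigr)^{\frac{1}{p-1}}+C\dashint_{2B}|\nabla u|\,dy,
\end{equation*}
while for $p\ge 2$ the correct object is the truncated potential $\mathbf{T}^{q}_{1}$ with $q=\tfrac{1}{p-1}$ raised to a suitable power, giving
\begin{equation*}
|\nabla u(x)|^{\frac{p}{2}}\le C\,\mathbf{M}_{1}(\mu_{2B})(x)^{\frac{p-2}{2(p-1)}}\,\mathbf{T}^{q}_{1}(\mu_{2B})(x)+C\dashint_{2B}|\nabla u|^{\frac{p}{2}}\,dy.
\end{equation*}
(I would cite Theorem \ref{gradbounds} for the precise statement; the exact form of the lower-order averaged term is what dictates whether one writes $|\nabla u|$ or $|\nabla u|^{p/2}$ on the right-hand sides of \eqref{pleq2bd} and \eqref{pgeq2bd}.) The key structural point is that $2-1/n<p\le2$ forces $q=\frac1{p-1}\ge1$, while $p\ge2$ forces $q\le1$; in both cases $0<q<\infty$, so Theorem \ref{morremb} applies with $\alpha=1$ and this value of $q$, and with $\sigma\equiv1$ (Lebesgue measure, trivially a weak $A_\infty$ weight).

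Next I would insert the bound into the exponential. Write $A=\|\mathbf{M}_{1}(\mu_{2B})\|_{L^\infty(2B)}$, assumed finite. In case (i), raising the pointwise estimate to the power $p-1$ and using that $(a+b)^{p-1}\le 2^{p-1}(a^{p-1}+b^{p-1})$ (valid since $p-1\le1$, in fact even more simply by subadditivity) gives
\begin{equation*}
|\nabla u(x)|^{p-1}\le C\,\mathbf{T}^{1}_{1}(\mu_{2B})(x)+C\Bigl(\dashint_{2B}|\nabla u|\,dy\Bigr)^{p-1}.
\end{equation*}
Dividing by $A$, exponentiating, and using $e^{s+t}=e^{s}e^{t}$ splits the integrand over $B$ into the product of $\exp\bigl(cA^{-1}\mathbf{T}^{1}_{1}(\mu_{2B})\bigr)$ and a constant (in $x$) factor $\exp\bigl(cA^{-1}(\dashint_{2B}|\nabla u|)^{p-1}\bigr)$. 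The first factor integrates, after averaging over $B$ and bounding $|B|^{-1}\le C|2B|^{-1}$, by Theorem \ref{morremb} (with $q=1$) to a dimensional constant $C$, because $A=\|\mathbf{M}_1(\mu_{2B})\|_{L^\infty(B)}$ is exactly the normalizing quantity appearing in \eqref{expint}. The constant factor, written as $\exp\bigl(cA^{-1/(p-1)}\dashint_{2B}|\nabla u|\bigr)^{p-1}$ after absorbing the power into the exponent, is precisely the right-hand side of \eqref{pleq2bd} up to replacing $2B$ by $B$ in the average — one enlarges the ball in the averaged term at the cost of adjusting $c$, which is harmless. In case (ii) the argument is the same but cleaner: one directly has $|\nabla u(x)|^{p/2}\le C A^{\frac{p-2}{2(p-1)}}\mathbf{T}^{q}_{1}(\mu_{2B})(x)+C\dashint_{2B}|\nabla u|^{p/2}\,dy$; dividing by $A^{\frac{p}{2(p-1)}}$ matches $A^{\frac{p-2}{2(p-1)}}/A^{\frac{p}{2(p-1)}}=A^{-\frac1{p-1}}$, which is the correct normalization of $\mathbf{T}^q_1$ for Theorem \ref{morremb} with this $q$ since $\|\mathbf{M}_1(\mu_{2B})\|_{L^\infty}^{1/q}=A^{p-1}$— wait, one checks the homogeneity: $\mathbf{T}^q_1$ has the same homogeneity in $\mu$ as $\mathbf{M}_1$, so $A^{-\frac1{p-1}}\mathbf{T}^q_1 = \bigl(A^{-1}\mathbf{T}^q_1\bigr)\cdot A^{1-\frac{1}{p-1}}$; I will track these powers carefully so that the quantity exponentiated is exactly $c(\mathbf{T}^q_1(\mu_{2B})/\|\mathbf{M}_1(\mu_{2B})\|_{L^\infty(B)})^q$ as in \eqref{expint}. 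Then \eqref{pgeq2bd} follows verbatim, again after enlarging the averaging ball in the lower-order term.

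The main obstacle, as the discussion above already signals, is bookkeeping of the exponents and homogeneities rather than any genuinely new estimate: one must verify that the power of $\|\mathbf{M}_1(\mu_{2B})\|_{L^\infty}$ produced by the Duzaar--Mingione estimate, after division and exponentiation, lines up exactly with the normalization $\mathbf{T}^q_\alpha(\mu_B)/\|\mathbf{M}_\alpha(\mu_B)\|_{L^\infty(B)}$ required by Theorem \ref{morremb}, with $q=\frac{1}{p-1}$, $\alpha=1$. A secondary technical point is the passage from $\mu_{2B}$ (the natural localization in the gradient estimate) to the ball $B$ on which Theorem \ref{morremb} is stated: one applies \eqref{expint} with $B$ replaced by $2B$, so that $2B$ becomes $4B$ and $B\subset 2B$; since $|B|^{-1}\le C_n|4B|_\sigma^{-1}$ for $\sigma\equiv1$ this only changes the dimensional constants $C,c$. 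Finally, one should note the elementary inequality $(a+b)^\gamma\le C_\gamma(a^\gamma+b^\gamma)$ used to separate the potential term from the averaged term is applied with $\gamma=p-1\le1$ in case (i) and $\gamma=1$ trivially in case (ii), so no convexity subtlety arises; the splitting $e^{s+t}=e^se^t$ then does the rest.
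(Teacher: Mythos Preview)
Your overall strategy is exactly the paper's: invoke the Duzaar--Mingione pointwise gradient bounds (Theorem \ref{gradbounds}), separate the potential term from the averaged lower-order term via $(a+b)^\gamma\le C_\gamma(a^\gamma+b^\gamma)$ and $e^{s+t}=e^se^t$, and apply Theorem \ref{morremb} with $\sigma\equiv 1$ to the potential term. Case (i) is fine as written.

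The problem is in case (ii): you have the wrong form of the gradient estimate, and hence the wrong $q$, and this is precisely the source of the homogeneity confusion you flag with ``wait.'' Theorem \ref{gradbounds}(ii) as stated in the paper reads
\[
|\nabla u(x)|^{p-1}\le C\,\mathbf{T}_1^{\,p/(2(p-1))}(\mu_{2B})(x)+C\Bigl(\dashint_{2B}|\nabla u|^{p/2}\,dy\Bigr)^{2(p-1)/p},
\]
so the relevant index is $q=\tfrac{p}{2(p-1)}$, not $q=\tfrac{1}{p-1}$, and there is no extra $\mathbf{M}_1(\mu_{2B})^{(p-2)/(2(p-1))}$ factor. (Your remark that ``$2-1/n<p\le 2$ forces $q=1/(p-1)$'' is also off: in case (i) the potential is simply $\mathbf{I}_1=\mathbf{T}^1_1$, i.e.\ $q=1$, as you yourself write; the paper records $q=1$ for $p\le 2$ and $q=p/(2(p-1))$ for $p\ge 2$.) With the correct estimate, raise both sides to the power $q=\tfrac{p}{2(p-1)}$ and divide by $A^{q}$ with $A=\|\mathbf{M}_1(\mu_{2B})\|_{L^\infty(2B)}$: the left side becomes $|\nabla u|^{p/2}/A^{p/(2(p-1))}$, and the potential term becomes $C\bigl(\mathbf{T}^q_1(\mu_{2B})/A\bigr)^{q}$, which is \emph{exactly} the argument of the exponential in \eqref{expint}. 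No homogeneity juggling is needed; the exponents match on the nose, and \eqref{pgeq2bd} follows immediately after exponentiating and applying Theorem \ref{morremb} on $2B$ (then restricting the average to $B\subset 4B$ at the cost of a dimensional constant).
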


One should compare Theorem \ref{quasthm} with the recent Lipschitz regularity results of Cianchi and Maz'ya \cite{CM}, and Duzaar and Mingione \cite{DM1}.  In these papers, the Lipschitz regularity of a solution $u$ of (\ref{dirprob}) is proved if $d\mu = f(x)dx$ with $f\in L^{n,1}(\Omega)$.  Here $L^{n,1}(\Omega)$ is the Lorentz space.  In particular, the exponential integrability result Theorem \ref{quasthm} holds for all $f\in L^n(\Omega)$.

Using Theorem \ref{morremb}, one can also recover known exponential integrability results for the solution $u$ of (\ref{dirprob}) from the estimates of Kilepl\"{a}inen and Maly \cite{KM}.  The following result (at least when $p\geq 2$) is contained in \cite{Min07}:

\begin{thm} \label{uexpintthm} Let $1<p<n$, and suppose that $u\in W^{1,p-1}_{\text{loc}}(\Omega)$ is a positive solution of (\ref{dirprob}).  Then there exist constants $c, C>0$, depending on $n$ and $p$, so that for all balls $B=B(x,r)$ so that $2B\subset \Omega$:
\begin{equation}\label{uexpint}
\dashint_{B}\exp\Bigl(c\frac{u(x)}{||\mathbf{M}_{p}(\mu_{2B})||_{L^{\infty}(2B)}}\Bigl) dx\leq C\exp\Bigl(c \dashint_B \frac{u}{||\mathbf{M}_{p}(\mu_{2B})||_{L^{\infty}(2B)}}dx\Bigl).
\end{equation}
\end{thm}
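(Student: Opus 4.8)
The plan is to reduce the statement to Theorem \ref{morremb} applied with $\alpha = p$ and $q = 1$, via the pointwise potential estimates of Kilpel\"ainen and Maly \cite{KM}. Recall that for a positive $p$-superharmonic solution $u$ of (\ref{dirprob}) with $1<p<n$, the Kilpel\"ainen--Maly theorem furnishes constants $c_1, c_2 > 0$ depending only on $n,p$ so that, pointwise on a ball,
\begin{equation*}
c_1 \, \mathbf{W}^{1,p}_{\mathrm{loc}}(\mu)(x) \; \leq \; u(x) \; \leq \; c_2\Bigl(\inf_{B} u + \mathbf{W}^{1,p}_{\mathrm{loc}}(\mu_{2B})(x)\Bigr),
\end{equation*}
where $\mathbf{W}^{1,p}$ is the truncated Wolff potential. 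The key algebraic point is that the (globally) truncated Wolff potential $\mathbf{W}^{1,p}_{R}(\nu)(x) = \int_0^{R}\bigl(\nu(B(x,r))/r^{n-p}\bigr)^{1/(p-1)}\frac{dr}{r}$ is, up to the truncation which is harmless for $\mu_{2B}$ supported in $2B$, precisely a constant multiple of $\mathbf{T}^{q}_{\alpha}(\mu_{2B})$ with $\alpha = p$ and $q = 1/(p-1)$ --- but since $\mathbf{T}^1_p$ is the Riesz potential and a power of $\mathbf{T}^{1/(p-1)}_p$ is again of the form $\mathbf{T}$, I should be careful: the cleanest route is to note $u(x) \lesssim \inf_B u + \mathbf{T}^{1/(p-1)}_{p}(\mu_{2B})(x)$ on $B$ (the truncation being absorbed since $\mu_{2B}(B(x,r)) = \mu_{2B}(2B)$ for $r$ large, contributing a finite tail controlled by $\|\mathbf{M}_p(\mu_{2B})\|_{L^\infty(2B)}$).

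First I would fix the ball $B$ with $2B \subset \Omega$ and set $A = \|\mathbf{M}_p(\mu_{2B})\|_{L^\infty(2B)}$; if $A = \infty$ there is nothing to prove, so assume $A < \infty$. Next, using the upper Kilpel\"ainen--Maly bound together with the observation above, obtain
\begin{equation*}
u(x) \;\leq\; c_2 \inf_{B} u \;+\; c_3\, \mathbf{T}^{1/(p-1)}_{p}(\mu_{2B})(x) \quad \text{for a.e. } x \in B.
\end{equation*}
Then I would invoke Theorem \ref{morremb} with $\sigma \equiv 1$ (Lebesgue measure, which is trivially a weak $A_\infty$ weight), $\alpha = p$, $q = 1/(p-1)$: this gives $\dashint_{2B}\exp\bigl(c_4 \mathbf{T}^{1/(p-1)}_p(\mu_{2B})(x)/A\bigr)^{1/(p-1)}\,dx \leq C$. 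Since raising to the power $1/(p-1)$ inside the exponential is exactly what makes the homogeneity match (the natural gauge for $\mathbf{T}^{1/(p-1)}_p$), and since $\mathbf{M}_p(\mu_{2B})$ on $B$ is comparable to $A$ up to the geometry, I get $\dashint_B \exp(c_5 \mathbf{T}^{1/(p-1)}_p(\mu_{2B})(x)/A)\,dx \leq C$. Finally, combining with the displayed pointwise bound and Jensen's / the elementary inequality $e^{a+b} \leq e^{2a} \cdot e^{2b}$ wait --- rather $e^{c(a+b)} = e^{ca}e^{cb}$, I split $\dashint_B \exp(c\,u/A) \leq \exp(c\,c_2 \inf_B u/A) \cdot \dashint_B \exp(c\,c_3 \mathbf{T}/A)$, bound the second factor by $C$, and bound $\inf_B u$ by $\dashint_B u$; choosing $c$ small enough relative to $c_4$ in Theorem \ref{morremb} makes everything go through, yielding (\ref{uexpint}).

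The main obstacle I anticipate is the bookkeeping around the \emph{truncation} of the Wolff potential and the passage from $B$ to $2B$: the Kilpel\"ainen--Maly estimate naturally produces a locally-truncated Wolff potential of $\mu_{2B}$ evaluated at points of $B$, whereas Theorem \ref{morremb} is stated for the full (untruncated) potential $\mathbf{T}^q_\alpha(\mu_B)$ with the integral over $2B$ of the exponential of $\mathbf{T}^q_\alpha(\mu_B)/\|\mathbf{M}_\alpha(\mu_B)\|_{L^\infty(B)}$. Matching the balls $B$ versus $2B$ in the restriction of $\mu$, absorbing the infinite tail $\int_{c r_B}^\infty(\mu_{2B}(2B)/r^{n-p})^{1/(p-1)}\frac{dr}{r} \lesssim (\mu(2B)/r_B^{n-p})^{1/(p-1)} \lesssim \mathbf{M}_p(\mu_{2B})$-type terms into the constant, and verifying that $\|\mathbf{M}_p(\mu_{B})\|_{L^\infty(B)} \lesssim \|\mathbf{M}_p(\mu_{2B})\|_{L^\infty(2B)}$ are all routine but need to be stated with care so that every constant depends only on $n$ and $p$. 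Everything else is soft: the reduction is essentially a dictionary between Wolff potentials and the operator $\mathbf{T}$, plus a direct application of Theorem \ref{morremb}.
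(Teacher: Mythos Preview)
Your proposal is correct and follows precisely the route the paper takes: the paper's own proof is a single sentence stating that one combines the Kilpel\"ainen--Mal\'y pointwise potential estimate \cite{KM} with Theorem~\ref{morremb}, ``in an analogous way to the proof of Theorem~\ref{quasthm}'', and leaves the details to the reader. Your identification of the Wolff potential with $\mathbf{T}^{1/(p-1)}_{p}$ (so that Theorem~\ref{morremb} applies with $\alpha=p$, $q=1/(p-1)$), together with the splitting $u\lesssim \inf_B u + \mathbf{W}^{1,p}(\mu_{2B})$ and the truncation/tail bookkeeping you flag, is exactly the intended argument.
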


Additional applications of Theorem \ref{morremb} to certain quasilinear elliptic equations will be presented in \cite{JV11}.  In these latter applications it will be important that Theorem \ref{morremb} holds for the full class of weak $A_{\infty}$ weights.

The results of this paper are split into two sections.  Firstly, in Section \ref{goodlamsec} we consider the good-$\lambda$ inequalities.  Subsequently, in Section \ref{expsec}, the results concerning exponential integrability and its consequences to quasilinear elliptic equations are proved.

%The structure of this paper is follows.   Firstly, in Section \ref{goodlamsec} we prove Theorem \ref{contthm} together with its dyadic analogue.  We begin with a discussion of dyadic potential operators (Theorem \ref{mainthm}), where the ideas are most transparent.  Subsequently we exhibit a simple example (Proposition \ref{sharpprop}) to show the sharpness of the constant appearing in (\ref{contlambda}), before moving on to proving Theorem \ref{contthm}.  Finally in Section \ref{goodlamsec}, we prove Theorem \ref{goodtauthm}.

%In Section \ref{expsec}, we consider exponential integrability; Theorem \ref{morremb} is proved twice - once as a consequence of Theorem \ref{contthm}, and also as a result of Theorem \ref{goodtauthm}.  We conclude the paper with the applications mentioned in Theorems \ref{quasthm} and \ref{uexpintthm}.

\section{The good-$\lambda$ inequality}\label{goodlamsec}

\subsection{The dyadic inequality}In this section we will prove Theorem \ref{contthm}.  To illustrate the idea of the proof in the simplest case we will work primary with the dyadic analogue of the operator $\mathbf{T}$, which we introduce now.  Let $\mathcal{Q}$ be the lattice of dyadic cubes in $\mathbf{R}^n$.  At each level $k \in \mathds{Z}$, the dyadic cubes of length $2^k$ (which we denote by $\mathcal{Q}_k$) are the collection of cubes in $\mathbf{R}^n$ which are the translations by $2^k\lambda$ for $\lambda = (\lambda_1,...,\lambda_n) \in \mathbf{Z}^n$ of the cube $[0,2^k)^n$.  Then the dyadic lattice $\mathcal{Q}$ is the union of these collections $\mathcal{Q}_k$ over all integers $k\in \mathds{Z}$.

Given $0<\alpha<n$ and $0<q<\infty$, we form the nonlinear homogeneous operator $\mathcal{T}$, acting on a nonnegative Borel measure $\mu$, by:
\begin{equation}\label{tdefn}\mathcal{T}(\mu)(x) = \Bigl\{\sum_{x\in Q\in \mathcal{Q}}\Bigl(\frac{\mu(Q)}{\ell(Q)^{n-\alpha}}\Bigl)^q\Bigl\}^{1/q}.\end{equation}
In the case when $q=\infty$, we denote the resulting operator by $\mathcal{M}_{\alpha}(\mu)$, the dyadic fractional maximal operator.
Our primary result for the dyadic operator is the following, which we state in the unweighted case:
\begin{thm}\label{mainthm} Let  $0<\alpha<n$ and $0<q<\infty$.  Then there exists a positive constant $C$, depending on $n, q,$ and $\alpha$, such that for each $\lambda>0$ and $0<\epsilon<1$, it follows:
\begin{equation}\label{goodlambda}\begin{split}
|\{x\in \mathbf{R}^n\, :\, \mathcal{T}(\mu)(x) > 2\lambda, & \; \mathcal{M}_{\alpha}(\mu)(x)\leq \epsilon \lambda\}| \\
&\leq C2^{-\frac{\alpha}{\epsilon^q}(2^q-1)}|\{x\in \mathbf{R}^n\, :\, \mathcal{T}(\mu)(x) > \lambda\}|.
\end{split}\end{equation}
\end{thm}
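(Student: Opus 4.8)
The plan is to reduce to a single‑cube estimate by the usual maximal‑cube decomposition, and then to prove that estimate by running an exponential Chebyshev argument along the dyadic tree. First I would write the superlevel set $\{\mathcal{T}(\mu)>\lambda\}$ as a disjoint union of maximal dyadic cubes $\{Q_j\}$; since $\mathcal{T}(\mu)(x)^q=\sum_{x\in Q}(\mu(Q)/\ell(Q)^{n-\alpha})^q$ is a sum over a nested chain, this set is genuinely a union of dyadic cubes. On a fixed $Q_j$ I would split $\mathcal{T}(\mu)(x)^q$ into the outer part $\sum_{Q\supsetneq Q_j}(\mu(Q)/\ell(Q)^{n-\alpha})^q$, which is constant on $Q_j$, and the inner part $\mathcal{T}_{Q_j}(\mu)(x)^q=\sum_{x\in Q\subseteq Q_j}(\mu(Q)/\ell(Q)^{n-\alpha})^q$. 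By maximality the dyadic parent of $Q_j$ contains a point $z$ with $\mathcal{T}(\mu)(z)\le\lambda$, and every cube strictly containing $Q_j$ contains $z$; hence the outer part is $\le\lambda^q$. So on $Q_j$ the event $\{\mathcal{T}(\mu)>2\lambda\}$ forces $\mathcal{T}_{Q_j}(\mu)(x)^q>(2^q-1)\lambda^q$, while $\{\mathcal{M}_{\alpha}(\mu)\le\epsilon\lambda\}$ forces $\mu(Q)/\ell(Q)^{n-\alpha}\le\epsilon\lambda$ for every $Q\ni x$ with $Q\subseteq Q_j$. After a dyadic dilation normalizing $\ell(Q_j)=1$ and a scaling $\mu\mapsto\mu/(\epsilon\lambda)$, the whole theorem reduces to: for every measure on the unit dyadic cube $Q_0$ (and one may assume $\mu(Q_0)\le 1$, else the set is empty),
$$\Big|\Big\{x\in Q_0:\ \textstyle\sum_{k\ge 0}a_k(x)^q>M,\ \sup_{k\ge 0}a_k(x)\le 1\Big\}\Big|\le C\,2^{-\alpha M}\,|Q_0|,\qquad M=\tfrac{2^q-1}{\epsilon^q},$$
where $a_k(x)=\mu(Q_{-k}(x))/\ell(Q_{-k}(x))^{n-\alpha}$ and $Q_{-k}(x)$ is the level $-k$ dyadic cube containing $x$.

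For this core estimate I would view a uniformly random $x\in Q_0$ as a random descent through the dyadic tree and set $S_k=\sum_{l=0}^{k}a_l(x)^q$ (nondecreasing), $\tau=\inf\{k:S_k>M\}$, $\sigma=\inf\{k:a_k(x)>1\}$, so the set in question sits inside $\{\tau<\sigma\}$. The crucial algebraic point is that passing from a cube $R$ to a child $R'$ multiplies the density by $2^{n-\alpha}$ times the mass fraction $\theta_{R'}=\mu(R')/\mu(R)$; since $\sum_{R'}\theta_{R'}=1$ over the $2^{n}$ children, only a fraction $\approx 2^{-\alpha}$ of them can sustain a given density level (be ``good''). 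I would exploit this by constructing a weight $\phi\colon[0,1]\to(0,\infty)$, bounded with $\inf\phi>0$, for which $e^{tS_k}\phi(a_k)\mathbf{1}_{\sigma>k}$, with $t=\alpha\log 2$, is a supermartingale; this amounts to the functional inequality
$$\frac{1}{2^{n}}\sum_{R'\ \text{good}}e^{t a_{R'}^{q}}\,\phi(a_{R'})\ \le\ \phi(a_R),\qquad a_{R'}=2^{n-\alpha}\theta_{R'}a_R,$$
to be checked by optimizing over admissible mass splittings $(\theta_{R'})$: the extremal splitting is the self‑similar one that pins the density at the critical value $1$, and for it the inequality degenerates to an equality, which is what pins down $\phi$ (morally $\phi(a)=e^{(c-t)a^{q}}$ with $c=t/(1-2^{-\alpha q})$). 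Optional stopping then gives $\mathbb{P}(\tau<\sigma)\le (e^{t}\|\phi\|_\infty/\inf\phi)\,e^{-tM}=C\,2^{-\alpha M}$, and summing the single‑cube bound over the $Q_j$ finishes the proof.

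The main obstacle is exactly the verification of the functional inequality with the sharp constant $t=\alpha\log 2$ (equivalently, the exponent $\alpha$ rather than $\alpha-\varepsilon$, or $\alpha q$ when $q<1$). Two competing configurations must be balanced by a single $\phi$: spreading the mass evenly among all $2^{n}$ children wastes no measure but drops the density by $2^{-\alpha}$, whereas concentrating it to keep the density at its maximum preserves the density but admits only a $2^{-\alpha}$ fraction of children; near the top of the admissible density range the second configuration forces $\phi$ to grow at a precise exponential rate, while the first forbids any faster growth, and matching these for every $q\in(0,\infty)$ — without letting the $e^{ta_k^q}$ factors at ``spread‑out'' cubes destroy the supermartingale property — is the delicate point. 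It is here that one genuinely uses that the fractional maximal operator sees dimension $\alpha$, namely that the critical self‑similar measure saturates the inequality; this also explains why the power of $\epsilon$ in \eqref{goodlambda} cannot be improved.
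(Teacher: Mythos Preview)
Your reduction to a single-cube estimate is exactly the paper's: decompose $\{\mathcal{T}(\mu)>\lambda\}$ into maximal dyadic cubes $Q_j$, bound the outer part by $\lambda^q$ using maximality, and (after rescaling) reduce to
\[
\Bigl|\Bigl\{x\in Q_0:\ \sum_{k\ge0}a_k(x)^q>M,\ \sup_{k\ge0}a_k(x)\le1\Bigr\}\Bigr|\le C\,2^{-\alpha M}|Q_0|,\qquad M=\tfrac{2^q-1}{\epsilon^q}.
\]

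The gap is in the supermartingale step, which you yourself flag as ``the main obstacle'' and do not carry out. In fact your candidate $\phi(a)=e^{(c-t)a^q}$, $c=t/(1-2^{-\alpha q})$, does \emph{not} satisfy the functional inequality at the sharp rate $t=\alpha\log2$. Take $q=1$, $n=2$, $\alpha=1$ (so $t=\log2$, $\phi(a)=2^{a}$). From $a_R=1$, send mass $\tfrac12$ to one child and $\tfrac16$ to each of the remaining three; then $a_{R'_1}=1$, $a_{R'_2}=a_{R'_3}=a_{R'_4}=\tfrac13$, all in $(0,1]$, and
\[
\frac{1}{4}\Bigl[e^{t}\phi(1)+3\,e^{t/3}\phi(\tfrac13)\Bigr]=\frac{1}{4}\bigl[4+3\cdot2^{2/3}\bigr]\approx 2.19>2=\phi(1).
\]
Worse, with your stopping time $\sigma=\inf\{k:a_k>1\}$ the children with $a_{R'}=0$ remain in the sum, and already the self-similar split at $a_R=1$ (two children at $1$, two at $0$) forces $\phi(1)+(1-2^{-\alpha})\phi(0)\le\phi(1)$, impossible for any $\phi(0)>0$. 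So the Bellman function you need is not the one you wrote down, and whether \emph{any} bounded positive $\phi$ delivers the sharp $t=\alpha\log2$ is precisely the question you leave open.

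The paper's route is far simpler and sidesteps all of this. After the same reduction, it spends the pointwise bound $a_k\le1$ on the first $m$ scales only, writing for $x$ in the set
\[
\sum_{k\ge0}a_k(x)^q\le m+\sum_{k\ge m}a_k(x)^q.
\]
The one-line identity $\int_{Q_0}a_k\,dx=2^{-k\alpha}\mu(Q_0)\le2^{-k\alpha}|Q_0|$ gives, via Chebyshev, $|\{x\in Q_0:a_k(x)>\gamma\}|\le\gamma^{-1}2^{-k\alpha}|Q_0|$. Covering $\{\sum_{k\ge m}a_k^q>M-m\}$ by $\bigcup_{k\ge m}\{a_k>c_\beta(M-m)^{1/q}2^{-\beta(k-m)}\}$ for any fixed $0<\beta<\alpha$ and summing the resulting geometric series yields
\[
|S|\le C_{q,\alpha,\beta}\,(M-m)^{-1/q}\,2^{-\alpha m}|Q_0|.
\]
Choosing $m=\lfloor M-1\rfloor$ gives $|S|\le C\,2^{-\alpha M}|Q_0|$ immediately: no weight function, no extremal analysis, just Chebyshev and a geometric sum. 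The ``sacrifice $m$ scales, then union bound'' trick is the idea your proposal is missing.
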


\begin{proof}[Proof of Theorem \ref{mainthm}] We will suppose $\mu$ has compact support.  Once (\ref{goodlambda}) has been established for such measures, the theorem follows by a routine approximation.

First denote $G_{\lambda} = \{x\, :\, \mathcal{T}(\mu)(x) > \lambda\}$.  This is an open set by lower semicontinuity of $\mathcal{T}$.  Let us decompose $G_{\lambda} = \cup_j Q_j$ where $Q_j$ are pairwise disjoint maximal dyadic (Whitney) cubes.  Thus, if $Q'_j$ is the dyadic parent cube of $Q_j$, then there is a point $z\in Q'_j$ so that $\mathcal{T}(\mu)(z) \leq \lambda$.

It suffices to prove, for each $j$, that:
\begin{equation}\label{goodlambdalocal}
|\{x\in Q_j\, :\, \mathcal{T}(\mu)(x) > 2\lambda, \; \mathcal{M}_{\alpha}(\mu)(x) \leq \epsilon \lambda\}| \leq Ce^{-c/\epsilon^q}|Q_j|,
\end{equation}
for positive constants $C$ and $c$, depending on $n, q,$ and $\alpha$.  Let us fix such a $Q_j$, and let $j_0$ be such that $\ell(Q_j) = 2^{j_0}$.   Without loss of generality we may suppose there exists $z\in Q_j$ such that $\mathcal{M}_{\alpha}(\mu)(z) \leq \epsilon \lambda$.  Let $S = \{x\in Q_j\, :\, \mathcal{T}(\mu)(x) > 2\lambda, \; \mathcal{M}_{\alpha}(\mu)(x) \leq \epsilon \lambda\}$.

Let us first note that since $Q_j$ is a maximal cube, there is a straightforward estimate on the tail of the potential:
\begin{equation}\label{tailest}\sum_{\substack{Q\supsetneq Q_j\\Q\in \mathcal{Q}}}\Bigl(\frac{\mu(Q)}{\ell(Q)^{n-p}}\Bigl)^{q} \leq \lambda ^q.
\end{equation}
The main technical novelty of our proof in comparison to the classical case is the following step, which uses the smallness of the fractional maximal operator somewhat more efficiently.  For a fixed $k\in \mathds{Z}$, we define $g_k(x)$:
\begin{equation}\label{gdef}g_k(x) = \sum_{\substack{x\in Q\\Q\in \mathcal{Q}_k}}\frac{\mu(Q)}{\ell(Q)^{n-\alpha}}.
\end{equation}
Note in particular that the sum in (\ref{gdef}) is over those cubes of sidelength $2^k$, so there is only one nonzero term in the sum, for any $x$.  Then for any $\gamma>0$, and $k\leq j_0$, it follows that:
\begin{equation}\label{glevel}
|\{x\in Q_j\, : g_{k}(x)>\gamma\}|\leq \frac{\epsilon \lambda}{\gamma} 2^{(k-j_0)\alpha}|Q_j |.
\end{equation}
To see the estimate (\ref{glevel}), note from Chebyshev's inequality:
\begin{equation}\begin{split}\nonumber
|\{x\in Q_j\, : g_{k}(x)>\gamma\}| & \leq \frac{1}{\gamma} \sum_{\substack{Q\in \mathcal{Q}_k \\ Q\subset Q_j}} \frac{\mu(Q)}{\ell(Q)^{n-\alpha}}|Q| \leq\frac{1}{\gamma}2^{k\alpha}\mu(Q_j)\\
& \leq \frac{1}{\gamma} |Q_{j_0}|2^{(k-j_0)\alpha}\epsilon \lambda.
\end{split}\end{equation}
In the last inequality we have used the estimate $\mathcal{M}_{\alpha}(\mu)(z) \leq \epsilon \lambda$ for some $z\in Q_j$.

Let $m\in \mathds{N}$ to be chosen later.  Then, note that if $x\in Q_j$ and $\mathcal{M}_{\alpha}(\mu)(x)\leq \epsilon \lambda$, it follows from (\ref{tailest}) that:
\begin{equation}\begin{split}\label{3split}
\sum_{\substack{x\in Q,\\Q\in \mathcal{Q}}}\Bigl(\frac{\mu(Q)}{\ell(Q)^{n-p}}\Bigl)^{q}  \leq \lambda^q + m(\epsilon\lambda)^q + \sum_{\substack{x\in Q,\, Q\in \mathcal{Q}, \\ \ell(Q) \leq 2^{j_0-m }}}\Bigl(\frac{\mu(Q)}{\ell(Q)^{n-p}}\Bigl)^{q}
\end{split}\end{equation}
Let $\beta<\alpha$, and note that:
$$S\subset Q_j\cap E, \text{ where }E = \bigcup_{k\leq j_0-m}^{\infty}\{g_k\geq \lambda 2^{\beta(k-j_0+m)}((2^q - m\epsilon^q -1) (1-2^{-q\beta}))^{1/q}\}.
$$
This is a simple consequence of (\ref{3split}), together with $\mathcal{T}(f)(x)>2\lambda$ for $x\in S$.
We now use the estimate (\ref{glevel}) to estimate $|S|$.  Indeed:
\begin{equation}\begin{split}\nonumber
|S|& \leq \sum_{k\leq j_0-m}^{\infty}|Q_j\cap\{g_k\geq \lambda 2^{\beta(k-j_0+m)}[(2^q - m\epsilon^q -1)(1-2^{-q\beta})]^{1/q}\}|\\
&\leq \sum_{k\leq j_0-m}^{\infty}|Q_j|\frac{\epsilon}{[(1-2^{-q\beta})(2^q - m\epsilon^q-1)]^{1/q}}2^{\alpha (k-j_0)}2^{\beta (j_0-m-k)}
\end{split}\end{equation}
In conclusion:
$$|S|\leq  2^{-\alpha m}|Q_j|\frac{\epsilon}{[(1-2^{-q\beta})(2^q - m\epsilon^q-1)]^{1/q}} \sum_{k=0}^{\infty} 2^{(\beta-\alpha)k}.
$$
It remains to make a good choice of $m$.  Let $m$ be:
$$m = \lfloor \frac{2^q-1}{\epsilon^q}-1\rfloor.
$$
Then, $|S|\leq C2^{-\alpha m}|Q_j|$, where $C = C(n,q,\alpha)>0$.
\end{proof}

\subsection{On the sharpness of the Theorem \ref{mainthm}} In this section we present an example to exhibit the sharpness of good-$\lambda$ inequality.  The example concerns the linear case $q=1$, but a simple adaptation works for the nonlinear potential.  Let us denote $\mathcal{T}_{\alpha}^1 = \mathcal{I}_{\alpha}$, as is standard for the dyadic Riesz potential.  Let us continue using the notation that $\mathcal{Q}$ be the lattice of dyadic cubes in $\mathbf{R}^n$.  Our aim is to prove the following:
\begin{prop}\label{sharpprop}  Let $\epsilon>0$, then there exists $f\geq0$, along with constants $c_1, c_2>0$, depending on $\alpha$ and $n$, so that:
\begin{equation}\label{sharpst}
|\{x\, :\, \mathcal{I}_{\alpha}(f)(x)>2,\, \mathcal{M}_{\alpha}(f)(x)\leq\epsilon\}|\geq c_1 e^{-c_2/\epsilon} |\{x\, :\, \mathcal{I}_{\alpha}(f)(x)>1\}|.
\end{equation}
\end{prop}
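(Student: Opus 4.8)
The plan is to construct $f$ as a sum of point-mass-like bumps arranged in a lacunary sequence of nested dyadic cubes, so that the dyadic Riesz potential $\mathcal{I}_\alpha(f)$ accumulates slowly (logarithmically, over many scales) while the fractional maximal operator $\mathcal{M}_\alpha(f)$ stays uniformly small. Concretely, I would fix a decreasing chain of dyadic cubes $Q_0 \supsetneq Q_1 \supsetneq \cdots \supsetneq Q_N$, each $Q_{i+1}$ being a fixed child of $Q_i$ (say $\ell(Q_{i+1}) = \tfrac12\ell(Q_i)$), and set $d\mu = f\,dx$ with $f = \sum_{i=0}^{N} a_i \chi_{Q_i^*}$ where $Q_i^*$ is a suitably small sub-cube deep inside $Q_i$ (or more simply take $\mu$ to be a sum of normalized measures on the $Q_i$). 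The weights $a_i$ are chosen so that each annulus $Q_i\setminus Q_{i+1}$ contributes exactly a fixed small amount $\delta>0$ to the potential at a point $x$ lying in the innermost cube $Q_N$: that is, $\mu(Q_i)/\ell(Q_i)^{n-\alpha}\approx\delta$ for each $i$.

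With this set-up the key computations are: (1) for $x\in Q_N$, $\mathcal{I}_\alpha(f)(x)=\sum_{x\in Q}\mu(Q)/\ell(Q)^{n-\alpha}\gtrsim N\delta$ (the $N$ relevant cubes are exactly $Q_0,\dots,Q_N$, plus a bounded tail), so choosing $N\delta\approx 2$ forces $\mathcal{I}_\alpha(f)(x)>2$ there; (2) the maximal function satisfies $\mathcal{M}_\alpha(f)(x)\lesssim\delta$ everywhere, because any dyadic cube $Q$ either sits above some $Q_i$ (contributing $\mu(Q_i)/\ell(Q)^{n-\alpha}\le\mu(Q_i)/\ell(Q_i)^{n-\alpha}\approx\delta$ when $Q\supset Q_i$ is the smallest such, and less for larger $Q$) or contains a bounded number of the sub-bumps fully, which by the normalization again contributes $O(\delta)$; (3) the level set $\{\mathcal{I}_\alpha(f)>1\}$ is comparable to $Q_0$ (or at worst to $Q_1$), since once $x$ leaves $Q_0$ only finitely many terms survive. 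Thus $|\{\mathcal{I}_\alpha(f)>2,\ \mathcal{M}_\alpha(f)\le\epsilon\}|\ge|Q_N|$ and $|\{\mathcal{I}_\alpha(f)>1\}|\le|Q_0|$, so the ratio is $\gtrsim|Q_N|/|Q_0| = 2^{-nN}$. Matching constants: we need $\delta\approx\epsilon$ (to ensure $\mathcal{M}_\alpha\le\epsilon$) and $N\delta\approx 2$, hence $N\approx 2/\epsilon$, giving the ratio $\gtrsim 2^{-2n/\epsilon} = e^{-c_2/\epsilon}$ with $c_2 = 2n\log 2$, which is exactly the asserted bound.

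The main obstacle I anticipate is controlling the maximal function $\mathcal{M}_\alpha(f)$ from above while simultaneously keeping $\mathcal{I}_\alpha(f)$ genuinely large on a set of definite size: the potential wants the mass concentrated (to make many scales contribute), but concentrated mass drives up $\mathcal{M}_\alpha$. The resolution is the lacunary/self-similar scaling: spreading each $a_i$ over a sub-cube $Q_i^*$ whose measure is chosen so that $a_i|Q_i^*| = \mu(Q_i) \approx \delta\,\ell(Q_i)^{n-\alpha}$, and taking $Q_i^*$ small enough and placed near the corner of $Q_i$ that leads into $Q_{i+1}$ ensures no single dyadic cube ever "sees" more than a bounded number of bumps worth of mass relative to its size. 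A secondary technical point is that the operators here are dyadic, so the geometry is rigid and one must check that the $Q_i$ are genuinely nested dyadic cubes and that every point of $Q_N$ lies in all of $Q_0,\dots,Q_N$ — this is immediate from the construction but should be stated. Finally, to get the nonlinear ($q\neq 1$) version one simply raises the per-scale contribution to the power $q$: each annulus contributes $\delta^q$ to $(\mathcal{T}^q_\alpha)^q$, so one needs $N\delta^q\approx 2^q$ and $\delta\approx\epsilon$, giving $N\approx 2^q/\epsilon^q$ and the ratio $e^{-c/\epsilon^q}$, matching Theorem \ref{mainthm}; I would remark on this adaptation at the end rather than carry it out in full.
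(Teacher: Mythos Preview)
Your construction is essentially the paper's own: a nested chain of $N\approx 1/\epsilon$ dyadic cubes with mass arranged so that each scale contributes $\delta\approx\epsilon$ to the potential, giving the ratio $|Q_N|/|Q_0|\approx e^{-c/\epsilon}$; the paper merely reverses the indexing (its $Q^0=[0,1)^n$ is the smallest cube, its $Q^N$ the largest) and spreads the mass uniformly over the annuli $Q^j\setminus Q^{j-1}$ rather than concentrating it in small sub-cubes $Q_i^*$. One caution: with concentrated bumps your claim that $\mathcal{M}_\alpha(f)\lesssim\delta$ \emph{everywhere} can fail on the bumps themselves (a tiny $Q_i^*$ carrying mass $\approx\delta\,\ell(Q_i)^{n-\alpha}$ gives $\mathcal{M}_\alpha\gg\delta$ there), so either spread the mass over the full annulus as the paper does, or make explicit that you only need the maximal bound on the innermost cube $Q_N$, which holds provided each $Q_i^*$ lies in $Q_i\setminus Q_{i+1}$ and hence is disjoint from $Q_N$.
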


\begin{proof} Let $\epsilon>0$.  To begin the proof, let us fix a cube $P\in \mathcal{Q}$.  Note that:
$$\mathcal{I}_{\alpha}(\chi_P)(x) \approx \ell(P_{x})^{\alpha-n}|P|,
$$
where $P_x$ is the smallest dyadic cube containing both $P$ and $x$.  Now, given $N\in \mathds{N}$ and $\delta>0$ to be chosen, let $Q^j = [0, 2^j)^n$ for $j=0, \dots, N$, and define $f(x)$ by:
\begin{equation}\label{defnf}
f(x) = \delta \chi_{Q^0} + \sum_{j=1}^N \delta \frac{\ell(Q^j)^{n-\alpha}}{|Q^j \backslash Q^{j-1}|}\cdot \chi_{Q^j \backslash Q^{j-1}}(x).
\end{equation}
Note that we can write:
$$Q^j \backslash Q^{j-1} = \bigcup_{\ell=1}^{(2^n-1)2^{n(j-1)}}Q^{j,\ell},
$$
where  $Q^{j, \ell} \in \mathcal{Q},\, \ell(Q^{j, \ell}) =1$, for all $\ell = 1, \dots, (2^n-1)2^{n(j-1)}$.  It therefore follows:
\begin{equation}\label{approxriesz}\mathcal{I}_{\alpha}(f)(x) \approx \delta \ell(Q^0_{x})^{\alpha-n} + \delta\sum_{j=1}^N  \frac{\ell(Q^j)^{n-\alpha}}{|Q^j \backslash Q^{j-1}|} \sum_{\ell=1}^{(2^n-1)2^{n(j-1)}} \ell(Q^{j, \ell}_{x})^{\alpha-n}.
\end{equation}
Let us define a auxiliary operator $\mathcal{A}$ by:
\begin{equation}\label{Adefn}\mathcal{A}(x)=  \delta \ell(Q^0_{x})^{\alpha-n} + \delta\sum_{j=1}^N  \frac{\ell(Q^j)^{n-\alpha}}{|Q^j \backslash Q^{j-1}|} \sum_{\ell=1}^{(2^n-1)2^{n(j-1)}} \ell(Q^{j, \ell}_{x})^{\alpha-n}.
\end{equation}
By (\ref{approxriesz}) it suffices to prove that, for suitable $c_1, c_2>0$:
\begin{equation}\label{ineq4a}
|\{x\, :\, \mathcal{A}(x)>2, \, \mathcal{M}_{\alpha}(f)(x) <\epsilon\}|\geq c_1 e^{-c_2/\epsilon}|\{\mathcal{A}(x)>1\}|.
\end{equation}

First we claim that, if $x\in Q^0$, then $\mathcal{A}(x) = (N+1)\delta$.  To see the claim, note that by construction $Q^{j,\ell}_x = Q^j$, for all $j\geq 1$.  Since $|Q^j \backslash Q^{j-1}| = (2^n-1)2^{n(j-1)}$ it follows:
\begin{equation}\label{claim1step}\mathcal{A}(x) = \delta + \delta\sum_{j=1}^N  \frac{\ell(Q^j)^{n-\alpha}}{|Q^j \backslash Q^{j-1}|} (2^n-1)2^{n(j-1)} \ell(Q^j)^{\alpha-n} = \delta (N+1),
\end{equation}
as required.

Our second claim concerns the decay of the potential in each annulus $Q^k\backslash Q^{k-1}$.  We will see the decay is approximately $\delta$ at each step.  In particular we claim that, if $x\in Q^k \backslash Q^{k-1}$, then:
\begin{equation}\label{claim2}
\mathcal{A}(x) = \delta \Bigl( \sum_{j=1}^k 2^{-j(n-\alpha)} + \sum_{j=1}^{k-1} 2^{-j\alpha} + \frac{2^n-2+2^{-k\alpha}}{2^n-1} +(N-k-1)\Bigl).
\end{equation}

To prove this claim, let us break up the sum in as $\mathcal{A}(x) = I + II + III$.  Here:
$$I = \delta \ell(Q^0_x)^{\alpha - n} + \delta \sum_{j=1}^{k-1}\frac{\ell(Q^j)^{n-\alpha}}{|Q^j \backslash Q^{j-1}|} \sum_{\ell=1}^{(2^n-1)2^{n(j-1)}} \ell(Q^{j, \ell}_{x})^{\alpha-n},
$$
$$II = \delta \frac{\ell(Q^k)^{n-\alpha}}{|Q^k \backslash Q^{k-1}|} \sum_{\ell=1}^{(2^n-1)2^{n(k-1)}} \ell(Q^{k, \ell}_{x})^{\alpha-n}, \text{ and:}
$$
$$III = \delta \sum_{j=k+1}^N  \frac{\ell(Q^j)^{n-\alpha}}{|Q^j \backslash Q^{j-1}|} \sum_{\ell=1}^{(2^n-1)2^{n(j-1)}} \ell(Q^{j, \ell}_{x})^{\alpha-n}.
$$
To estimate $I$, note that $Q^{j, \ell}_x = Q^k$ for any $j = 1, \dots, k-1$ and $Q^0_x = Q^k$.  Thus:
$$I = \delta 2^{k(\alpha - n)} \sum_{j=0}^{k-1}2^{j(n-\alpha)} = \delta \sum_{j=1}^k 2^{(\alpha -n)j}.
$$
Now, we move onto estimate $II$.  Let us first make some elementary observations.  Note that there is one cube in $\{Q^{k, \ell}\}_{\ell}$, such that $\ell(Q^{k, \ell}_x) = 1$, and for each $j= 1, \dots k-1$, there are $(2^n-1)2^{n(j-1)} (= |Q^j\backslash Q^{j-1}|)$ cubes in the collection $\{Q^{k, \ell}\}_{\ell}$, such that $\ell(Q^{k, \ell}_x) = 2^j$.  When $j=k$, there are $(2^n-2)2^{n(k-1)}$ cubes in $\{Q^{j,\ell}\}_{\ell}$ with  $\ell(Q^{k, \ell}_x) = 2^k$.  With this in mind we see that:
$$II = \delta \frac{2^{-k\alpha+1}}{2^n-1} + \delta \sum_{j=1}^{k-1} 2^{\alpha (j- k)} + \delta \frac{2^n-2}{2^n-1}.
$$
Finally, to estimate $III$, note that, whenever $j>k$, then: $Q^{j, \ell}_x = Q^j$.  Thus, similarly to (\ref{claim1step}), we see that:
$$III = (N-k-1)\delta, \text{ and hence }(\ref{claim2})\text{ is proved}.
$$
%If $x\in Q^k \backslash Q^{k-1}$, then (\ref{approxriesz}) gives:
%\begin{equation}\begin{split}\mathcal{A}(x) & = \sum_{j=1}^{k-1} \delta \frac{\ell(Q^k)^{n-\alpha}}{|Q^k\backslash Q^{k-1}|}\frac{|Q^j\backslash Q^{j-1}|}{\ell(Q^j)^{n-\alpha}} + \delta\sum_{j=0}^k \frac{|Q^j\backslash Q^{j-1}|}{|Q^k\backslash Q^{k-1}|}  + \sum_{j=k+1}^N \delta.
%\end{split}\end{equation}
%Therefore, if $x\in Q^k\backslash Q^{k-1}$,
%$$(N-k-1)\delta \lesssim I_{\alpha}(f)(x) \lesssim (N-k +C)\delta,
%$$
%$$\text{and, }I_{\alpha}(f)(x) \leq I_{\alpha}(f)(y), \text{ if } y\in Q_0.
%$$
We now turn to the size estimates for the fractional maximal operator.  If $x\in Q_0$,
\begin{equation}\label{maximalsmall}\mathcal{M}_{\alpha}(f)(x) = \sup_{0\leq k\leq N} \frac{\delta }{2^{k(n-\alpha)}} \sum_{j=0}^k2^{j(n-\alpha)}\leq \frac{\delta}{1-2^{\alpha-n}} .
\end{equation}
Now, let us choose: $$\delta = \epsilon(1-2^{\alpha-n}), \text{ and }N = \Bigl\lfloor \frac{2}{\delta} \Bigl\rfloor.$$
It follows from (\ref{claim1step}), (\ref{claim2}) and (\ref{maximalsmall}) that: $\{\mathcal{A}(x) >2, \, \mathcal{M}_{\alpha}(f)(x)\leq \epsilon\} \supset Q^0$, and $\{\mathcal{A}(x) >1\}\subset Q^{k_0}$ with $k_0 \approx 1/\epsilon$.  The proposition follows.
\end{proof}

\subsection{The proof of Theorem \ref{contthm}}  In this subsection we show how the dyadic proof above can be modified to obtain Theorem \ref{contthm}.

\begin{proof}[Proof of Theorem \ref{contthm}]
We will again assume $\mu$ is compactly supported.  It will be most convenient to work with the pointwise equivalent operator:
$$\mathbf{F}(\mu) = \Bigl(\sum_{j\in \mathds{Z}}\Bigl(\frac{\mu(B(x,2^j))}{2^{j(n-\alpha)}}\Bigl)^q \Bigl)^{1/q}.$$
Let us denote by $G_{\lambda} = \{x\, :\, \mathbf{F}(\mu)(x)>\lambda\}$.  From the Whitney cube decomposition, one obtains disjoint dyadic cubes $\{Q_j\}$, so that:
\begin{equation}\begin{split}\label{whitprop}(i). \,\bigcup_j Q_j = & G_{\lambda},\, (ii).\, \sum_j \chi_{2Q_j} \leq C(n)\chi_{G_{\lambda}}, \text{ and }\\
&(iii).\,\frac{3}{2}<\frac{\text{dist}(Q_j, \mathbf{R}^n)}{\text{diam}(Q_j)}\leq 6.\end{split}\end{equation}
Arguing as in (\ref{goodlambdalocal}), we fix such a cube $Q_j$ with side length $2^{j_0}$, and we may suppose there exists $z\in Q_j$ so that $\mathbf{M}_{\alpha}(\mu)(z)\leq \epsilon\lambda$.  Let us denote: $S = \{x\in Q_j\, :\, \mathbf{F}(\mu)(x) > \tau\lambda, \; \mathbf{M}_{\alpha}(\mu)(x) \leq \epsilon \lambda\}$.
As in (\ref{tailest}), we see by definition of the Whitney cube, that there exists a constant $\tau>1$ so that, if $x\in Q_j$ is such that $M_{\alpha}(\mu)(x)\leq \epsilon \lambda$:
\begin{equation}\label{conttailest}
\sum_{j>j_0} \Bigl(\frac{\mu(B(x,2^j))}{2^{j(n-\alpha)}}\Bigl)^q \leq \Bigl(\frac{\tau}{2}\Bigl)^q\lambda^q. 
\end{equation}
Indeed, by the property (iii) of the Whitney decomposition, we find $y\not\in Q_j$ so that $d(y, Q_j)\leq 6\sqrt{n} \ell(Q_j)$, and $\mathbf{F}(\mu)(y)\leq \lambda$.  Therefore, with $x\in Q_j$ such that $M_{\alpha}(\mu)(x)\leq \epsilon \lambda$, it follows, for a constant $A>0$ depending on dimension:
\begin{equation}\begin{split}
\sum_{j>j_0} \Bigl(\frac{\mu(B(x,2^j))}{2^{j(n-\alpha)}}\Bigl)^q & \leq \sum_{j_0+A \geq j>j_0}  \Bigl(\frac{\mu(B(x,r))}{2^{j(n-\alpha)}}\Bigl)^q + \sum_{j>j_0 + A} \Bigl(\frac{\mu(B(x,2^j))}{2^{j(n-\alpha)}}\Bigl)^q\\
& \leq A\epsilon \lambda + C(n,q,\alpha)\sum_{j>j_0} \Bigl(\frac{\mu(B(y,2^j))}{2^{j(n-\alpha)}}\Bigl)^q \leq \Bigl(\frac{\tau}{2}\Bigl)^q \lambda^q.
\end{split}\end{equation}
Let us now, for $j\leq j_0$, define:
$$g_k(x) = 2^{j(\alpha-n)}\mu(B(x,2^j)).
$$
Then, by Fubini's theorem and since $k\leq j_0$, it follows for any $z\in Q_{j}$:
\begin{equation}\label{contgint}\begin{split}
\int_{Q_j} g_k(x) dx &= \int_{Q_j+B(0,2^k)} 2^{k(\alpha-n)}|B(y, 2^k)| d\mu(y) \\
&= 2^{k\alpha}|(Q_j)+2^k|_{\mu}\leq c(n)2^{(k-j_0)\alpha}|Q_{j}|\mathbf{M}_{\alpha}(\mu)(z).
\end{split}\end{equation}
With $z$ chosen as above, so $\mathbf{M}_{\alpha}(\mu)(z)\leq \epsilon \lambda$.
Hence, as in the proof of (\ref{glevel}):
\begin{equation}\label{gintsmall}
|\{x\in Q_j\, :\, g_k >\gamma\}|\leq \frac{c(n)}{\gamma}|Q_{j}|2^{k-j_0}\epsilon \lambda
\end{equation}
The rest of the proof therefore follows as in the dyadic case.  Indeed, for $x\in Q_j$ so that $\mathbf{M}_{\alpha}(\mu)(x)\leq \epsilon \lambda$:
$$\sum_{j\in \mathds{Z}} \Bigl(\frac{\mu(B(x,2^j))}{2^{j(n-\alpha)}}\Bigl)^q \leq \Bigl(\frac{\tau}{2}\Bigl)^q \lambda^q + m(\epsilon \lambda)^q + \sum_{j\leq j_0-m}\Bigl(\frac{\mu(B(x,2^j))}{2^{j(n-\alpha)}}\Bigl)^q
$$
Let $\beta<\alpha$.  If $x\in S$, then $\mathbf{F}(\mu)(x) > 2\lambda$, and hence
$$S\subset Q_j\cap E, \text{ where:}$$
$$E = \bigcup_{k\leq j_0-m}^{\infty}\{g_k\geq \lambda 2^{\beta(k-j_0+m)}((\tau^q - m\epsilon^q -(\tau/2)^q) (1-2^{-q\beta}))^{1/q}\}.
$$
Using (\ref{gintsmall}). one estimates:
$$|S|\leq 2^{-\alpha m}|Q_j|\frac{c(n)\epsilon}{[(1-2^{-q\beta})(\tau^q - m\epsilon^q -(\tau/2)^q)]^{1/q}} \sum_{k=0}^{\infty} 2^{(\beta-\alpha)k}
$$
Now, let $m$ be:
$$m = \lfloor \frac{\tau^q(1-2^{-q})}{\epsilon^q}-1 \rfloor.
$$
Then, $|S|\leq C2^{-\alpha m}|Q_j|$, where $C = C(n,p,\alpha)>0$.   To complete the argument we recall a very standard trick.  Thus far, we have asserted the existence constants $\tau,c, C$, depending on $n, q$ and $\alpha$, such that:
$$| \{x\in Q_j\, :\, \mathbf{F}(\mu)(x) > \tau \lambda, \; \mathbf{M}_{\alpha}(\mu)(x) \leq \epsilon \lambda\}|\leq Ce^{-c\lambda^q}|Q_j|,
$$
for each Whitney cube $Q_j$.  Let $\sigma$ be a weak $A_{\infty}$-weight, then by the definition (\ref{defainf}), it holds that:
\begin{equation}\label{afterdefnainf}
| \{x\in Q_j\, :\, \mathbf{F}(\mu)(x) > \tau \lambda, \; \mathbf{M}_{\alpha}(\mu)(x) \leq \epsilon \lambda\}|_{\sigma}\leq Ce^{-c\lambda^q}|2Q_j|_{\sigma}
\end{equation}
for constants $c,C$ now in addition depending on the weak $A_{\infty}$ character.
Using property (ii) of the Whitney decomposition (see (\ref{whitprop})), we can sum (\ref{afterdefnainf}) to obtain (\ref{contlambda}).
\end{proof}

\subsection{The proof of Theorem \ref{goodtauthm}}
\begin{proof}[Proof of Theorem \ref{goodtauthm}]
Once again we assume that $\mu$ has compact support.  We split the set $\{(\mathbf{T}(\mu))^q(x) > \lambda\}$ into the Whitney cubes satisfying (\ref{whitprop}) as usual. Let us assume that cube $Q_j$ contains $x$ such that $\mathcal{M}_\alpha(\mu)^q(x) <\epsilon \lambda$. Let us put $r_0=\ell(Q_j$) and fix a point $y$ such that $(\mathbf{T}(\mu))^q(x)\leq\lambda$ and
$|x-y|\leq A r_0$, with $A=A(n)>0$. We write 
\begin{equation}\int_{r_0}^{\infty}\Bigl( \frac {\mu(B(x,r)}{r^{n-\alpha}} \Bigl)^{q}\frac{dr}{r}\leq \sum_{k=0}^{\infty}\int_{2^{k}r_0}^{2^{k+1}r_0} \Bigl( \frac {\mu(B(x,r)}{r^{n-\alpha}} \Bigl)^{q} \frac{dr}{r}.
\end{equation}
First we observe that since $\mathcal{M}_\alpha(\mu)^q(x) <\epsilon \lambda$ we have
$$\int_{2^{k}r_0}^{2^{k+1}r_0} \Bigl(\frac {\mu(B(x,r)}{r^{n-\alpha}}\Bigl)^q \frac{dr}{r}\leq \epsilon \lambda$$ for any $k$. Next, we see that 
\begin{equation}\begin{split}\int_{2^{k}r_0}^{2^{k+1}r_0}  \Bigl( \frac {\mu(B(x,r)}{r^{n-\alpha}} \Bigl)^{q}  \frac{dr}{r}& \leq 
(1+A2^{-k})^{(n-\alpha)q} \int_{(2^{k}+A)r_0}^{(2^{k+1}+A)r_0} \Bigl( \frac {\mu(B(y,r)}{r^{n-\alpha}} \Bigl)^{q}  \frac{dr}{r}\\
&\leq  \int_{(2^{k}+A)r_0}^{(2^{k+1}+A)r_0}  \Bigl( \frac {\mu(B(x,r)}{r^{n-\alpha}} \Bigl)^{q} \frac{dr}{r} +s(k)\epsilon \lambda.
\end{split}\end{equation}
Here $s(k)= (1+A2^{-k})^{(n-\alpha)q} -1  $ is a term with geometric decay in $k$. Summing the above, we obtain
\begin{equation}\begin{split}\nonumber\int_{r_0}^{\infty}\Bigl( \frac {\mu(B(x,r)}{r^{n-\alpha}}\Bigl)^q \frac{dr}{r}& \leq C_s \epsilon \lambda +\sum_{k=0}^{\infty} \int_{(2^{k}+A)r_0}^{(2^{k+1}+A)r_0}\Bigl( \frac {\mu(B(y,r)} {r^{n-\alpha}} \Bigl)^q\frac{dr}{r}\\
& \leq C_s \epsilon \lambda + (\mathbf{T}(\mu))^q(y) \leq (1+ C_s \epsilon) \lambda.\end{split}\end{equation}

We denote $\mu^j=\mu\chi_{3*Q_j}$ and note that we have
$\|\mu^j\|\leq C_n (\epsilon \lambda)^{1/q} r_0^{n-\alpha}$ from the fractional maximal operator estimate. Now     
\begin{equation}\begin{split}(\mathbf{T}(\mu))^q(x)& =\int_{0}^{r_0}\Bigl( \frac {\mu(B(x,r))}{r^{n-\alpha}} \Bigl)^q \frac{dr}{r} \leq \int_{0}^{r_0} \Bigl(\frac{\mu^j(B(x,r))}{r^{n-\alpha}}\Bigl)^q \frac{dr}{r} \\
&\leq C_{n,q} r ^{\alpha q } (M\mu^j(x))^q.
\end{split}\end{equation}

Here $M$ is the Hardy-Littewood maximal operator.  From the classical weak type estimate for $M$, we conclude:  
\begin{equation}\begin{split}|\{(\mathbf{T}(\mu^j))^q > c'\epsilon \lambda\}\cap Q_j|& \leq |\{C r_0^{\alpha} M\mu^j(x) > c'(\epsilon \lambda)^{1/q}\}|\\
&\leq  \frac{C_n (\epsilon \lambda)^{1/q} r_0^{n-\alpha}}{c' (\epsilon\lambda)^{1/q} C_{n,q}^{-1} r_0^{-\alpha}}\leq \frac{C_n}{c'C_{n,q}}|Q_j|. \end{split}\end{equation}
Now, fix $\sigma$ to be a weak-$A_{\infty}$ weight.  It immediately follows from the definition (\ref{defainf}) that:
$$|\{(\mathbf{T}(\mu))^q > (1+c'\epsilon) \lambda\}\cap Q_j|_{\sigma} \leq C(\sigma)\Bigl(\frac{C_n}{c'C_{n,q}}\Bigl)^{\theta}|2Q_j|_{\sigma}.
$$
Now, by summation of this inequality, using the property (ii) from (\ref{whitprop}), we deduce:
$$|\{(\mathbf{T}(\mu))^q > (1+c'\epsilon) \lambda\}\cap \{\mathbf{M}_\alpha \leq \epsilon \lambda\}|_{\sigma} \leq C_nC(\sigma)\Bigl(\frac{C_n}{c'C_{n,q}}\Bigl)^{\theta}|\{(\mathbf{T}(\mu))^q>\lambda\}|_{\sigma}.
$$
Picking $c'$ sufficiently large (in terms of $\alpha$, $q$, $n$ and the weak $A_{\infty}$ character of $\sigma$) proves the theorem.
\end{proof}

\section{Exponential integrability}\label{expsec}

We begin this section by deducing Theorem \ref{morremb} from Theorem \ref{contthm}.

\begin{proof}[Proof of Theorem \ref{morremb} from Theorem \ref{contthm}]
Let $B=B(x_0, R)$, and without loss of generality let us assume that:
\begin{equation}\label{maximal1}||\mathbf{M}_{\alpha}(\mu_B)||_{L^{\infty}(B)}\leq 1.\end{equation}
Note that for $x\not\in 2B$, it follows that:
$$\mathbf{T}(\mu_B)(x) \leq ((n-\alpha)q)^{1/q}\frac{\mu(B)}{R^{n-\alpha}}\leq c(q).
$$
Therefore, whenever $\lambda>c(q)$, it follows:
$$\{x\in \mathbf{R}^n\, :\, \mathbf{T}(\mu_B)(x)>\lambda\} \subset 2B.$$
For fixed $\lambda>c(q)$, letting $\epsilon = 1/\lambda$ in (\ref{contlambda}), we deduce that:
\begin{equation}\label{levelset}
|\{x\in 2B: \mathbf{T}(\mu_B)>2\lambda\}|_{\sigma}\leq e^{-c\lambda^q} |\{x\in 2B: \mathbf{T}(\mu_B)>\lambda\}|_{\sigma}.
\end{equation}
Here we are using the normalization (\ref{maximal1}).  Therefore, for all $\lambda>c(q)$:
$$|\{x\in 2B: \mathbf{T}(\mu_B)>2\lambda\}|_{\sigma}\leq Ce^{-c\lambda^q} |2B|_{\sigma}.
$$
The theorem follows from integrating this inequality.
\end{proof}

Let us now turn to proving Theorem \ref{morremb} as a result of Theorem \ref{goodtauthm}:

\begin{proof}[Proof of Theorem \ref{morremb} from Theorem \ref{goodtauthm}]
Assume that $\mathbf{M}_\alpha(\mu_B) \leq 1$ and proceed by induction. We first put $\lambda = c$ and we choose $\epsilon = 1/c$. This leads to an estimate
$$   
|\{x\in \mathds{R}^n\, :\, (\mathbf{T}(\mu_B))^q(x) > 2\lambda\}|_{\sigma} \leq C|\{x\in \mathds{R}^n\, :\,(\mathbf{T}(\mu_B))^q(x) > \lambda\}|_{\sigma}.
$$
Now at level $k$, we put $\epsilon=1/( 2^{k} c)$. For $1\leq l \leq 2^{k}$ we obtain inequalities 
\begin{equation}\begin{split}   
|\{x\in \mathds{R}^n\, :\, &(\mathbf{T}(\mu_B))^q(x)  >(1+l 2^{-k}) 2^k\lambda\}|_{\sigma} \\
&\leq C|\{x\in \mathds{R}^n\, :\, (\mathbf{T}(\mu_B))^q(x)> (1+(l-1) 2^{-k})2^{k}\lambda\}|_{\sigma}.
\end{split}\end{equation}
This proves the required exponentional decay of the distribution function.
\end{proof}

\begin{rem}\label{expsharp}To exhibit the sharpness of this inequality, let $\alpha = 1$, and consider $\mu = |x|^{-1}\chi_{B(0,1)}$.  Then, one can readily compute:
$$\mathbf{T}_1^q(\mu)(x) \approx (\log|x|)^{1/q}, \text{ for }x\in B(0,1).
$$
Therefore, $\mathbf{T}_1^q(\mu)$ is exponentially integrable to the $q$-th power, but not to any power greater than $q$.
\end{rem}

Let us now turn to Theorem \ref{quasthm}.   We will first define a suitable notion of solution:
\begin{defn}\label{vweak}  Let $\Omega$ be a bounded open set.  We say that $u$ is a \textit{very weak solution} of (\ref{dirprob}) if:
$$u\in L^{\frac{(p-1)n}{n-p},\infty}_{\text{loc}}(\Omega)\footnote{By definition, $f\in L^{s,\infty}_{\text{loc}}(\Omega)$ if for each compact set $K\subset \Omega$, there exists a constant $C_K>0$ so that 
$ \sup_{\lambda>0} \lambda|\{x\in K\, :\, |f(x)|>\lambda\}|^{1/s}\leq C_K$.}, \text{ and } |\nabla u|\in L^{\frac{(p-1)n}{n-1},\infty}_{\text{loc}}(\Omega),
$$
and equation (\ref{dirprob}) holds in the sense of distributions.
\end{defn}
That such a solution exists for each finite measure $\mu$ is not obvious, and the result can be found in \cite{DMMOP, HKM}\footnote{In \cite{DMMOP} the stronger notion of \textit{renormalized solution} is considered}.  There are many open problems around notions of solution to such quasilinear elliptic equations.  Our results for the equation (\ref{dirprob}) are a result of the following recent result of Duzaar and Mingione \cite{DM2}.
\begin{thm}[\cite{DM2} Theorems 1.1, 1.2 and Remark 1.1] \label{gradbounds} Suppose that $u$ is a very weak solution of:
$$-\Delta_p u = \mu \text{ in }\Omega
$$
for a finite nonnegative measure $\mu$.  Then:

\indent {\rm(i).} If $1-2/n <p<2$, then there exists a constant $C=C(n,p)>0$ so that for all balls $B(x,R)\subset \Omega$:
\begin{equation}\label{pleq2est}|\nabla u(x)|^{p-1}\leq C\mathbf{I}_1(\mu\chi_{B(x,R)}) (x)+ C\Bigl(\int_{B(x,r)}|\nabla u| dz\Bigl)^{p-1}.
\end{equation}
\indent {\rm(ii).} If $p\geq 2$, then  there exists a constant $C=C(n,p)>0$ so that for all balls $B(x,R)\subset \Omega$:
\begin{equation}\label{pgeq2est}|\nabla u(x)|^{p-1} \leq C \mathbf{T}_1^{\frac{p}{2(p-1)}}(\mu \chi_{B(x,R)})(x) + C \Bigl(\int_{B(x,r)}|\nabla u|^{\frac{p}{2}} dz\Bigl)^{\frac{2(p-1)}{p}}.
\end{equation}
\end{thm}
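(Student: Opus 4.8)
The plan is to reproduce the scheme of Duzaar and Mingione \cite{DM2, DM3}: one couples comparison estimates against the homogeneous $p$-Laplace equation with the classical $C^{1,\beta}$-regularity theory for that equation, and then iterates the resulting excess-decay inequality along a dyadic sequence of radii. Since the right-hand sides of \eqref{pleq2est} and \eqref{pgeq2est} are defined at every point, it suffices to prove the bounds at a fixed Lebesgue point $x$ of $\nabla u$ (for $1<p<2$, of $|\nabla u|^{p-1}$); the pointwise statement for a.e.\ $x$ then follows.

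The first ingredient is a \emph{comparison estimate}. For a ball $B_\rho=B(x,\rho)\subset B(x,R)$, let $v$ be $p$-harmonic in $B_\rho$ with $v=u$ on $\partial B_\rho$. Testing the equations for $u$ and for $v$ against $u-v$ and invoking the strong monotonicity of $\xi\mapsto|\xi|^{p-2}\xi$ gives, with $V(\xi)=|\xi|^{\frac{p-2}{2}}\xi$,
\[
\dashint_{B_\rho}|\nabla u-\nabla v|\,dz\lesssim\Bigl(\tfrac{\mu(B_\rho)}{\rho^{n-1}}\Bigr)^{\frac{1}{p-1}}\quad(1<p<2),\qquad
\dashint_{B_\rho}|V(\nabla u)-V(\nabla v)|\,dz\lesssim\Bigl(\tfrac{\mu(B_\rho)}{\rho^{n-1}}\Bigr)^{\frac{p}{2(p-1)}}\quad(p\ge2).
\]
In the singular range the a priori hypothesis $|\nabla u|\in L^{(p-1)n/(n-1),\infty}_{\mathrm{loc}}$ is precisely what allows this step to be carried out, one working with truncations / renormalized solutions and then passing to the limit. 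The second ingredient is the DiBenedetto--Tolksdorf--Uhlenbeck regularity theory for $-\Delta_p v=0$: there is $\beta=\beta(n,p)\in(0,1)$ so that for $0<\sigma\le\frac12$ the excess decays, $\dashint_{B_{\sigma\rho}}|\nabla v-(\nabla v)_{B_{\sigma\rho}}|\le C\sigma^{\beta}\dashint_{B_\rho}|\nabla v-(\nabla v)_{B_\rho}|$, together with the sup bound $\sup_{B_{\rho/2}}|\nabla v|\le C\dashint_{B_\rho}|\nabla v|$ (with $V(\nabla v)$ in place of $\nabla v$ when $p\ge2$).

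Combining these one runs an \emph{excess-decay iteration}. Setting $E(\rho)=\dashint_{B_\rho}|\nabla u-(\nabla u)_{B_\rho}|$ (respectively with $V(\nabla u)$ when $p\ge2$) and splitting $\nabla u=(\nabla u-\nabla v)+\nabla v$, one obtains for a small fixed $\sigma$ that $E(\sigma\rho)\le\frac12 E(\rho)+C(\mu(B_\rho)/\rho^{n-1})^{t}$, with $t=\frac1{p-1}$ if $1<p<2$ and $t=\frac{p}{2(p-1)}$ if $p\ge2$. Iterating along $\rho_k=\sigma^kR$ and summing the geometric series yields $\sum_{k\ge0}E(\rho_k)\lesssim E(R)+\sum_{k\ge0}(\mu(B_{\rho_k})/\rho_k^{n-1})^{t}$, and the last sum is comparable to $\int_0^R(\mu(B(x,r))/r^{n-1})^{t}\,\tfrac{dr}{r}$, i.e.\ to $\mathbf{I}_1(\mu\chi_{B(x,R)})(x)^{1/(p-1)}$ when $p<2$ and to $\mathbf{T}_1^{p/(2(p-1))}(\mu\chi_{B(x,R)})(x)^{p/(2(p-1))}$ when $p\ge2$. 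Since $x$ is a Lebesgue point, $\nabla u(x)$ (resp.\ $V(\nabla u)(x)$, noting $|V(\nabla u)(x)|=|\nabla u(x)|^{p/2}$) is the limit of the averages, hence controlled by $\dashint_{B(x,R)}|\nabla u|$ (resp.\ $\dashint_{B(x,R)}|\nabla u|^{p/2}$) plus this sum; using $(a+b)^{p-1}\lesssim a^{p-1}+b^{p-1}$ in the first case and raising to the power $\frac{2(p-1)}{p}$ in the second produces \eqref{pleq2est} and \eqref{pgeq2est}.

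The main obstacle is the comparison step in the degenerate regime $p\ge2$: the correct linearizing quantity is $V(\nabla u)$ rather than $\nabla u$ itself, and establishing the comparison inequality in the stated form requires the full vectorial monotonicity and ellipticity of the $p$-Laplacian together with careful tracking of the scalings. A second delicate point is making the comparison argument rigorous in the singular range $p<2$, where $u$ is only known a priori to lie in $W^{1,p-1}_{\mathrm{loc}}$, so that the comparison solution $v$ and the testing procedure must be set up through truncations and only afterwards passed to the limit.
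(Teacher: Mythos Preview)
The paper does not give a proof of this theorem: it is quoted verbatim from Duzaar and Mingione \cite{DM2} (Theorems 1.1, 1.2 and Remark 1.1) and used as a black box in the proof of Theorem \ref{quasthm}. There is therefore no ``paper's own proof'' to compare your proposal against.

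That said, what you have written is a fair high-level summary of the Duzaar--Mingione strategy: a local comparison with the $p$-harmonic replacement on each ball, the DiBenedetto--Tolksdorf $C^{1,\beta}$ excess decay for the comparison function, and then an iteration along a geometric sequence of radii whose telescoping sum reproduces the truncated Riesz potential (for $p<2$) or the nonlinear potential $\mathbf{T}_1^{p/(2(p-1))}$ (for $p\ge 2$). Your identification of $V(\xi)=|\xi|^{(p-2)/2}\xi$ as the right quantity in the degenerate range, and of the truncation/approximation issue in the singular range, matches the genuine technical content of \cite{DM2}. As a sketch this is correct in spirit; turning it into an actual proof requires the quantitative comparison lemmas and the delicate a priori/approximation arguments carried out in full in \cite{DM2, DM3}, which is precisely why the present paper simply cites the result rather than reproving it.
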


Let us now use Theorem \ref{gradbounds} and Theorem \ref{morremb} to deduce Theorem \ref{quasthm}.

\begin{proof}[Proof of Theorem \ref{quasthm}]
Let $B=B(x_0,R)$ be so that $2B \subset \Omega$.   Without loss of generality, let us suppose that:
$$||\mathbf{M}_{\alpha}(\mu_{2B})||_{L^{\infty}(2B)} =1.
$$
From Theorem \ref{gradbounds}, it follows that for all $x\in B(x_0,R)$:
\begin{equation}\label{gradestloc}|\nabla u(x)|^{p-1} \leq C\mathbf{T}(\mu\chi_{B(x_0,2R)})(x) + C\Bigl(\dashint_{B(x_0, 2R)}|\nabla u|^{\max(1,p/2)}dx\Bigl)^{\frac{p-1}{\max(1,p/2)}}.
\end{equation}
Here $\mathbf{T}$ denotes the relevant operator appearing in Theorem \ref{gradbounds}.  From Theorem \ref{morremb}:
\begin{equation}\label{expintdoub}\dashint_{2B}\exp\Bigl(c\frac{\mathbf{T}(\mu\chi_{2B})(x)}{||\mathbf{M}_{\alpha}(\mu\chi_{2B})||_{L^{\infty}(2B)}}\Bigl)^q dx\leq C.
\end{equation}
With $q=1$ if $p<2$ and $q = p/2(p-1)$ if $p>2$.  From (\ref{gradestloc}) and (\ref{expintdoub}) the result follows.
\end{proof}

\begin{rem}\label{gradregrem}  The exponential integrability result (\ref{pleq2bd}) in the case when $p\leq 2$ is sharp.  It seems reasonable to conjecture that one should also have integrability to the $p-1$ power in the bound (\ref{pgeq2bd}).  In lieu of Remark \ref{expsharp}, this bound could only be achieved by potential estimates if the estimate (\ref{pgeq2est}) was improved to match (\ref{pleq2est}).  In light of \cite{DM2,DM3}, this would be a deep result.
\end{rem}

\begin{proof}[Proof of Theorem \ref{uexpintthm}]The proof of Theorem \ref{uexpintthm} follows from combining Theorem \ref{morremb} with the potential estimate of Kilpel\"{a}inen and Maly (see \cite{KM}, Theorem 1.6), in an analogous way to the proof of Theorem \ref{quasthm}.  We leave the details to the reader.
\end{proof}

\end{document}